\theoremstyle{plain}
\newtheorem{theorem}{Theorem} [section]
\newtheorem{corollary}[theorem]{Corollary}
\newtheorem{lemma}[theorem]{Lemma}
\newtheorem{proposition}[theorem]{Proposition}
\theoremstyle{definition}
\newtheorem{remark}[theorem]{Remark}
\def\N{{\mathbb{N}}}
\def\R{{\mathbb{R}}}
\def\Z{{\mathbb{Z}}}
\def\A{{\mathcal{A}}}
\def\clspan{{\overline{\mathrm{span}}}}
\newcommand{\p}{\varphi}
\newcommand{\w}{\omega}
\newcommand{\g}{\gamma}
\newcommand{\dd}{\delta}
\newcommand{\G}{\Gamma}
\newcommand{\W}{\Omega}
\newcommand{\T}{\mathcal{T}}
\newcommand{\s}{\sigma}
\newcommand{\SR}{\mathcal{D}}
\newcommand{\SN}{\mathcal{N}}
\newtheorem{thm*}{Teorema}[section]
\newcommand{\CHI}{\hbox{\raise .4ex \hbox{$\chi$}}}
\newcommand{\set}[1]{\{#1\}}
\newcommand{\dotoplus}{\operatornamewithlimits{\dot{\oplus}}}
\newcommand{\dotbigoplus}{\operatornamewithlimits{\dot{\bigoplus}}}
\def\subset{\subseteq}
\def\supp{{\textrm{supp}}}
\def\clspan{{\overline{\mathrm{span}}}}
\def\T{{\mathcal{T}}}
\def\EQ{\, = \,}
\begin{document}

\title{Extra Invariance of shift-invariant spaces on LCA groups}
\author[M. Anastasio, C. Cabrelli, and V. Paternostro]{M. Anastasio,
C. Cabrelli, and V. Paternostro}

\address{\textrm{(M. Anastasio)}
Departamento de Matem\'atica,
Facultad de Ciencias Exac\-tas y Naturales,
Universidad de Buenos Aires, Ciudad Universitaria, Pabell\'on I,
1428 Buenos Aires, Argentina and
CONICET, Consejo Nacional de Investigaciones
Cient\'ificas y T\'ecnicas, Argentina}
\email{manastas@dm.uba.ar}

\address{\textrm{(C. Cabrelli)}
Departamento de Matem\'atica,
Facultad de Ciencias Exac\-tas y Naturales,
Universidad de Buenos Aires, Ciudad Universitaria, Pabell\'on I,
1428 Buenos Aires, Argentina and
CONICET, Consejo Nacional de Investigaciones
Cient\'ificas y T\'ecnicas, Argentina}
\email{cabrelli@dm.uba.ar}

\address{\textrm{(V.Paternostro)}
Departamento de Matem\'atica,
Facultad de Ciencias Exac\-tas y Naturales,
Universidad de Buenos Aires, Ciudad Universitaria, Pabell\'on I,
1428 Buenos Aires, Argentina and
CONICET, Consejo Nacional de Investigaciones
Cient\'ificas y T\'ecnicas, Argentina}
\email{vpater@dm.uba.ar}
 \thanks{The research of
the authors is partially supported by
Grants: ANPCyT, PICT 2006--177, CONICET, PIP 5650, UBACyT X058 and X108.
}

\subjclass[2000]{Primary 43A77; Secondary 43A15}

\keywords{Shift-invariant space, translation invariant space,
LCA groups, range functions, fibers}

\date{\today}
\maketitle

\begin{abstract}
Let $G$ be an LCA group and $K$ a closed subgroup of $G$.
A closed subspace of $L^2(G)$ is called $K$-invariant if  it is invariant under translations by elements of $K$.

Assume now that $H$ is a countable uniform lattice in $G$ and $M$ is any closed subgroup of $G$ containing $H$.
In this article we study  necessary and sufficient conditions for an $H$-invariant space to be $M$-invariant.

As a consequence of our results we prove that for each closed subgroup $M$
of $G$ containing the lattice $H$, there exists an $H$-invariant space $S$ that is exactly $M$-invariant.
That is, $S$ is not invariant under any other subgroup $M'$ containing $M$. We also obtain estimates on the support of the  Fourier transform  of the generators of the $H$-invariant space, related to  its  $M$-invariance.
\end{abstract}

\section{Introduction}

Let $G$ be an LCA group and $K \subset G$ a  subgroup.
For $ y \in G$ let us denote by $t_y$ the translation operator acting on $ L^2(G)$. That is,
$t_y f(x) = f(x-y)$ for $x \in G$ and $f \in L^2(G)$.

A closed subspace $S$ of $L^2(G)$
satisfying that $t_k f \in S$ for every $ f\in S$ and every $k\in K$ is called $K$-invariant.

 In  case that  $G$ is $\R^d$ and $K$ is $\Z^d$ the subspace $S$ is called  shift-invariant.
Shift-invariant spaces are central in several areas such as approximation theory, wavelets,
frames and sampling.

The structure of these spaces for  the group $\R^d$ has been studied in \cite{Hel64,Sri64,HS64, dBDR94a,dBDR94b,Bow00} and, in
the context of general LCA groups, in \cite{KR08,CP09}.

Let now $H \subset G$ be a countable uniform lattice and $M$ be any closed subgroup of  $G$ satisfying that $H \subset M \subset G$.
In this article we want to study necessary and sufficient conditions in order that
an $H$-invariant space $S\subset L^2(G)$ is $M$-invariant.

For the case $(\Z,M,\R)$, this has been studied by Aldroubi et al in \cite{ACHKM09} and
for the case  $(\Z^d, M, \R^d)$ by Anastasio et al in \cite{ACP09}.

In this article we consider the extension to general LCA groups.
This is important in order to obtain general conditions that can be applied to different cases, as
for example  the case of the classic groups as the d-dimensional torus $\mathbb{T}^d$, the discrete group $\Z^d,$ and the finite group $\Z_d$.

This article is organized as follows. In Section \ref{sec-2} we set the notation and give some definitions. We study the properties of the invariance set  in Section \ref{sec-3}. In Section \ref{sec-4} we describe the structure  of $M$-invariant spaces and  range functions
in the context of LCA groups. The characterizations of $M$-invariance for $H$-invariant spaces is given in section \ref{sec-5-0}. Finally in Section \ref{sec-5} we give some applications.

\section{Notation}\label{sec-2}

Let $G$ be an arbitrary  locally compact Hausdorff abelian  group (LCA) written additively.
We will denote  by $m_G$ its Haar measure. The dual group of $G$, that is, the set of continuous characters on $G$,  is denoted  by $\widehat{G}$. The value of the character $\g\in\widehat{G}$ at the point $x\in G$, is written by $(x,\g)$.

The Fourier transform of a Haar integrable function $f$ on $G$, is the function $\widehat{f}$ on $\widehat{G}$ defined by
$$\widehat{f}(\g)=\int_G f(x)(x,-\g)\,dm_G(x),\quad\g\in\ \widehat{G}.$$
When the Haar measures $m_G$ and $m_{\widehat{G}}$ are normalized such that the Inversion Formula holds (see \cite{rudin}),
the Fourier transform on $L^1(G)\cap L^2(G)$ can be extended
to a unitary operator from $L^2(G)$ onto $L^2(\widehat{G})$, the so-called
Plancharel transformation. We also denote this transformation by ``$\land$".

Note that  the Fourier transform satisfies $\widehat{t_xf}(\cdot)=(-x,\cdot)\widehat{f}(\cdot)$.

For a subgroup $K$ of $G$, the set
$$K^*=\{\gamma \in \widehat{G}: (k,\g)=1, \,\,\forall\,\, k\in K\}$$
is called the {\it annihilator} of $K$. Since every character in $\widehat{G}$ is continuous, $K^*$ is a closed subgroup of $\widehat{G}$.

We will say that a closed subspace  $V\subset L^2(G)$ is {\it $K$-invariant}
if
$$ f\in V \Rightarrow t_kf\in V\quad\forall\,\,k\in K.$$

For a subset $\mathcal{A}\subset L^2(G)$, define
$$E_K(\A)= \{t_k\p: \p\in\A, k\in K\}\quad\textrm{and}\quad S_K(\A)=\clspan \,E_K(\A).$$
We call $S_K(\A)$
the $K$-invariant space generated by $\A$. If $\A=\{\p\}$,
we simply write $S_K(\p)$, and we call  $S_K(\p)$ a {\it principal  $K$-invariant space}.

Let $L$ be a subset of $G$, we will say that a function $f$ defined on $G$ is $L$-periodic
if $t_{\ell}f=f$ for all $\ell\in L$. A subset $B\subset G$ is $L$-periodic if its indicator function (denoted
by $\chi_B$) is $L$-periodic.

When two LCA groups $G_1$ and $G_2$ are topologically isomorphic we will write  $G_1\approx G_2$.

\section{The invariance  set}\label{sec-3}

Here and subsequently $G$ will be an LCA group and $H$ a countable uniform lattice in $G$. That is, a countable discrete subgroup of $G$ with compact quotient group $G/H$.

For simplicity of notation throughout this paper we will write $\G$ instead of $\widehat{G}$.

The aim of this work is  to characterize the extra invariance of an $H$-invariant space.
For this, given $S\subset L^2(G)$  an $H$-invariant space, we define the {\it invariance set } as
\begin{equation}\label{M}
M=\{x\in G\,:\, t_xf \in S, \,\forall\,\,f\in S\}.
\end{equation}
If  $\A$ is a set of generators for $S$, it is easy to check that $m\in M$ if and only if $t_m\p \in S$ for all $\p\in \A$.

In case that $M=G$,
Wiener's theorem (see \cite{Hel64}, \cite{Sri64} and \cite{HS64}) states that there exists
a measurable set $E\subset \G$
satisfying $$S=\{f\in L^2(G)\,:\, \supp(\widehat{f}\,)\subseteq E\}.$$
We want to describe $S$ when $M$ is not all $G$. We will first study the structure of the set $M$.

\begin{proposition}\label{M-isgroup}

Let $S$ be an $H$-invariant space of $L^2(G)$ and let $M$ be defined as in (\ref{M}).
Then $M$ is a closed subgroup of $G$ containing $H$.

\end{proposition}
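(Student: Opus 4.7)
There are three things to establish: (i) $H \subseteq M$, (ii) $M$ is closed in $G$, and (iii) $M$ is a subgroup, i.e.\ closed under addition and inversion. The first is immediate from the hypothesis that $S$ is $H$-invariant. The closedness in (ii) follows by writing
\[
M \EQ \bigcap_{f \in S} \{x \in G : t_x f \IN S\},
\]
and observing that for each fixed $f \in L^2(G)$ the map $x \mapsto t_x f$ is continuous from $G$ into $L^2(G)$ (a standard fact for LCA groups), so each set in the intersection is the preimage of the closed set $S$ under a continuous map.

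For closure under addition, if $m_1, m_2 \IN M$ and $f \IN S$, then $t_{m_2} f \IN S$, and applying $t_{m_1}$ yields $t_{m_1+m_2} f \EQ t_{m_1}(t_{m_2} f) \IN S$; so $m_1 + m_2 \IN M$. The only genuinely nontrivial point is the existence of inverses, and this is where the hypothesis that $H$ is a uniform lattice (so $G/H$ is compact) must be used, since for a general closed subsemigroup of $G$ one cannot expect $-m \IN M$.

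The idea I would use for inverses is to pass to the quotient $G/H$. Since $H \subseteq M$ and $H$ is a group, we have $M + H \SUBSET M$ and $M \SUBSET M+H$, hence $M = M + H$; thus $M$ is a union of $H$-cosets, so its image $\pi(M) \SUBSET G/H$ under the quotient map $\pi\colon G \TO G/H$ satisfies $\pi^{-1}(\pi(M)) = M$. Consequently $\pi(M)$ is closed in the compact group $G/H$, hence compact, and it is clearly a subsemigroup containing the identity coset. Applying the standard fact that a closed subsemigroup of a compact (Hausdorff) topological group is automatically a subgroup, I conclude that $\pi(M)$ is a subgroup of $G/H$; pulling back, $-m \IN M$ for every $m \IN M$.

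\textbf{Main obstacle.} The potentially delicate step is the inverse-closure argument. The cleanest route to the ``compact subsemigroup is a subgroup'' fact is: given $x \IN \pi(M)$, extract a convergent subsequence $n_k x \TO y$ from the sequence of multiples $(n x)_{n \GE 1}$ (possible by compactness), thin so that $n_{k+1} - n_k \GE 2$, and note that $(n_{k+1} - n_k)x \TO 0$, so that $(n_{k+1} - n_k - 1)x \TO -x$; since each of these elements lies in $\pi(M)$ and $\pi(M)$ is closed, $-x \IN \pi(M)$. I would need to check that this quotient-space argument is truly needed (there is no shortcut via the unitarity of $t_m$ on $L^2(G)$, because a priori $t_m(S)$ is only a closed subspace of $S$, not all of $S$), but once this is accepted the remaining steps are routine.
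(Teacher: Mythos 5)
Your proposal is correct and follows essentially the same route as the paper: closedness via continuity of $x \mapsto t_x f$, and inverses obtained by passing to the compact quotient $G/H$, observing that $\pi(M)$ is a closed subsemigroup there (using $M+H=M$), and invoking the fact that a compact semigroup in a topological group is a group (which the paper cites from Hewitt--Ross rather than reproving, as you sketch). The only caveat is that your subsequence extraction should formally be a net/subnet argument in a general compact group, but this does not affect the substance.
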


For the proof of this proposition we will need the following lemma.
Recall that a semigroup is a nonempty  set with an associative additive operation.
\begin{lemma}\label{semi}
Let $K$ be a closed semigroup of $G$ containing $H$, then $K$ is a group.
\end{lemma}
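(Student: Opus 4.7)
The plan is to reduce the problem to the well-known fact that a closed subsemigroup of a compact abelian topological group is automatically a subgroup, exploiting the compactness of $G/H$ which comes from $H$ being a uniform lattice.

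First I would verify the easy facts: $0 \in H \subset K$, so $K$ contains the identity. Since $H$ is a genuine subgroup (so $-h \in H \subset K$ for every $h \in H$) and $K$ is a semigroup, one checks that $K+H = K$, i.e.\ $K$ is saturated for the projection $\pi : G \to G/H$. In particular $\pi^{-1}(\pi(K)) = K$, which is closed, so $\pi(K)$ is closed in the compact group $G/H$. Thus $\pi(K)$ is a compact (closed) subsemigroup of the abelian group $G/H$.

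Next I would prove the following standalone claim: any closed subsemigroup $C$ of a compact abelian (additive) group $Q$ is a subgroup. Fix $c \in C$ and apply compactness to the sequence $\{nc\}_{n\ge 1}\subset C$ to extract a convergent subsequence $n_i c \to c^*$, where I may also assume $n_{i+1}-n_i \ge 2$. Then $(n_{i+1}-n_i)c \to 0$ lies in $C$, and $(n_{i+1}-n_i-1)c \to -c$, also lies in $C$ since $n_{i+1}-n_i-1 \ge 1$. Because $C$ is closed, $0 \in C$ and $-c \in C$, so $C$ is a group.

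Applying the claim to $C = \pi(K) \subset Q = G/H$, I get that $\pi(K)$ is a subgroup of $G/H$. To finish, take any $k \in K$: then $-\pi(k) \in \pi(K)$, so there exists $k' \in K$ with $k + k' \in H$, say $k + k' = h$. Hence $-k = k' - h = k' + (-h)$ with $k' \in K$ and $-h \in H \subset K$, and using the semigroup property once more, $-k \in K$. This shows $K$ is a group.

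The main obstacle is the semigroup-to-group step inside $G/H$; the trick is choosing the subsequence with $n_{i+1}-n_i$ large enough that both $(n_{i+1}-n_i)c$ and $(n_{i+1}-n_i-1)c$ remain in the semigroup, so closedness produces both $0$ and $-c$. The passage from $G$ to $G/H$ is what converts $K$ (which need not be compact) into a compact object to which this classical argument applies; without compactness of $G/H$ the statement would genuinely fail, so invoking the uniform-lattice hypothesis is essential here.
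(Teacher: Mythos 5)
Your argument follows the paper's proof essentially verbatim in its key step: you use $K+H=K$ to show $K=\pi^{-1}(\pi(K))$, deduce that $\pi(K)$ is a closed, hence compact, subsemigroup of $G/H$, and then invoke the fact that a compact subsemigroup of a (compact) group is a group --- the only difference being that the paper cites this fact from Hewitt--Ross (Theorem 9.16) while you prove it directly with the cluster-point trick on $\{nc\}_{n\ge 1}$. That self-contained argument is correct as written when $G/H$ is metrizable (e.g.\ when $G$ is second countable, as the paper assumes from Section 4 on); for a general LCA group you should replace the convergent subsequence of $\{nc\}$ by a subnet or a cluster point, since compact spaces need not be sequentially compact, but the argument goes through unchanged after that cosmetic repair.
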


\begin{proof}

Let $\pi$ be the quotient map from $G$ onto $G/H$.
Since  $K$ is a semigroup containing $H$, we have that $K+H=K$, thus
\begin{equation}\label{cerrado}
\pi^{-1}(\pi(K))=\bigcup_{k\in K}k+H=K+H=K.
 \end{equation}
This shows that $\pi(K)$ is closed in $G/H$ and therefore compact.

By \cite[Theorem 9.16]{HR79}, we have that a compact semigroup of $G/H$  is necessarily a group,
thus  $\pi(K)$ is a group and consequently $K$ is a group.

\end{proof}

\begin{proof}[\textit{ Proof of Proposition \ref{M-isgroup}}]

Since $S$ is an $H$-invariant space, $H\subset M$.

We first proceed to show that $M$ is closed.
Let $x_0\in G$ and $\{x_{\lambda}\}_{\lambda\in\Lambda}$ a net in $M$ converging to $x_0$.
Then
$$\lim_{\lambda}\|t_{x_{\lambda}}f-t_{x_0}f\|_2=0.$$
Since $S$ is closed, it follows that $t_{x_0}f\in S$, thus $x_0\in M$.

It is easy to check that $M$ is a semigroup of $G$, hence we conclude from Lemma \ref{semi}
that $M$ is a group.

\end{proof}

\section{The structure of principal $M$-invariant spaces}\label{sec-4}

\subsection{Preliminaries}\noindent
Shift-invariant spaces in $L^2(\R^d)$ are completely characterized using  fiberization techniques and range functions (see \cite{Bow00}). This theory has been extended to general LCA groups in \cite{CP09}. In what follows we state  some
definitions and properties given in that work.

We will assume that $G$ is a second countable LCA group and $H$ a countable uniform lattice in $G$.

The fact that $G$ is second countable, $G/H$ is compact and $\widehat{G/H}\approx H^*$,  implies that $H^*$ is  countable and discrete. Moreover, since  $\G/H^*\approx \widehat{H}$, $H^*$ is a countable uniform lattice in $\G$.
Therefore, there exists a measurable section $\W$ of $\G/H^*$ with finite $m_{\G}$-measure (see \cite{KK98}
and \cite{FG64}).

Let $L^2(\W,\ell^2(H^*))$
be the space of all measurable functions $\Phi:\W\rightarrow\ell^2(H^*)$ such that
$$\|\Phi\|_2^2:=\int_{\W}\|\Phi(\w)\|_{\ell^2(H^*)}^2\,dm_{\G}(\w)<\infty.$$

The following proposition shows that the space $L^2(\W,\ell^2(H^*))$ is isometric (up to a constant) to $L^2(G)$.

\begin{proposition}
The mapping $\T_H:L^2(G)\longrightarrow L^2(\W,\ell^2(H^*))$ defined as
$$\T_H f(\w)=\{\widehat{f}(\w+h^*)\}_{h^*\in H^*}$$
is an isomorphism that satisfies
$\|\T_H f\|_2=m_{H^*}(\{0\})^{1/2}\|f\|_{L^2(G)}.$
\label{prop-tau-H}
\end{proposition}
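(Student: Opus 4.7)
The plan is to reduce the statement to Plancherel on $\G$ plus a tiling decomposition of $\G$ coming from the section $\W$ of $\G/H^*$. The preliminary observation is that since $\W$ is a measurable section and $H^*$ is countable, the map $(\w,h^*)\mapsto \w+h^*$ is a measurable bijection between $\W\times H^*$ and $\G$ modulo null sets, so the translates $\{\W+h^*\}_{h^*\in H^*}$ form a measurable partition of $\G$. This makes $\T_Hf$ measurable for every $f\in L^2(G)$, essentially as a ``folding'' of $\fhat$ onto $\W$ indexed by the coset representatives.

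The first main step is the norm identity. For $f\in L^2(G)$, I would compute
\begin{align*}
\|\T_Hf\|_2^2 &=\int_\W \|\T_Hf(\w)\|_{\ell^2(H^*)}^2\,dm_\G(\w)
=\int_\W \sum_{h^*\in H^*}|\fhat(\w+h^*)|^2\,dm_\G(\w).
\end{align*}
Using the partition $\G=\dotbigcup_{h^*}(\W+h^*)$ and translation invariance of $m_\G$, the double sum/integral collapses to $\int_\G|\fhat(\g)|^2\,dm_\G(\g)$ up to the normalization constant that relates the counting measure on $H^*$ (implicit in $\|\cdot\|_{\ell^2(H^*)}$) to the Haar measure $m_{H^*}$ on the discrete group $H^*$. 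Since $H^*$ is discrete, $m_{H^*}$ is a multiple of counting measure with proportionality constant $m_{H^*}(\{0\})$, and tracking this constant through Weil's quotient integration formula produces the factor $m_{H^*}(\{0\})$. Combining with Plancherel $\|\fhat\|_{L^2(\G)}=\|f\|_{L^2(G)}$ yields $\|\T_Hf\|_2=m_{H^*}(\{0\})^{1/2}\|f\|_{L^2(G)}$. In particular, $\T_H$ is injective and has closed range.

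For surjectivity, given $\Phi\in L^2(\W,\ell^2(H^*))$, I would define $g:\G\to\C$ by $g(\w+h^*):=\Phi(\w)(h^*)$, which is unambiguous by the partition property; running the norm computation backwards shows $g\in L^2(\G)$, and then $f:=(g)^{\vee}\in L^2(G)$ satisfies $\T_Hf=\Phi$ by construction. This finishes the proof that $\T_H$ is an isomorphism with the stated norm scaling.

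The only delicate point is the bookkeeping of Haar measure normalizations in the third paragraph: on $G$ and $\G$ they are fixed by Plancherel, but on $H^*$ one has two natural choices (counting versus Haar), and the compatibility of the section measure $m_\G|_\W$ with the quotient measure on $\G/H^*$ must be invoked through Weil's formula in order for the constant to come out as $m_{H^*}(\{0\})$ and not some other power. This is really the only non-routine step; the rest of the argument is a direct unwinding of definitions.
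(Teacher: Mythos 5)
Your argument is correct, but note that the paper itself gives no proof of this proposition: it is imported verbatim from the reference [CP09], so there is nothing internal to compare against. Your route (tiling $\G = \dotbigcup_{h^*\in H^*}(\W+h^*)$, Plancherel, and folding back for surjectivity) is the standard one and surely what the cited source does. One small correction to your ``delicate point'': Weil's quotient integration formula is not actually needed here. The integral $\int_\W\|\T_Hf(\w)\|^2_{\ell^2(H^*)}\,dm_\G(\w)$ is taken against the restriction of $m_\G$ to the section, so the partition plus translation invariance of $m_\G$ already collapses the double sum/integral to $\int_\G|\fhat|^2\,dm_\G$ exactly; the factor $m_{H^*}(\{0\})$ enters solely through the convention that the inner product on $\ell^2(H^*)$ carries the Haar weight $m_{H^*}(\{0\})$ rather than counting measure (this convention is visible in the paper's Gramian formula (\ref{gram})). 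With that bookkeeping point relocated, the proof is complete.
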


For $f\in L^2(G)$, the sequence $\T_H f(\w)=\{\widehat{f}(\w+h^*)\}_{h^*\in H^*}$ is {\it the $H$-fiber of $f$ at $\w$}.
Given a  subspace $V$ of $L^2(G)$ and $\w \in \W$,
the $H$-\emph{fiber space} of~$V$ at~$\w$ is
$$J_H(V)(\w)
=\overline{\{\T_H f(\w):
    f \in V }\},$$
where the closure is taken in the norm of $\ell^2(H^*)$.

The map that assigns to each $\w$ the fiber space $J_H(V)(\w)$
is known in the literature as the {\it range function} of $V$.

The following proposition characterizes  $H$-invariant spaces in terms of range functions and fibers.

\begin{proposition}\label{bownik-1-H}
If $S$ is an $H$-invariant space in $L^2(G)$
, then
$$S=\{f \in L^2(G) :
        \T_H f(\w)\in J_H(S)(\w) \text{ for a.e. }\w\in\W \}.$$
Moreover, if $S=S_H(\A)$ for a countable set $\A\subset L^2(G)$, then, for almost every $\w\in\W$,
$$J_H(S)(\w)=\clspan\{\T_H \varphi(\w):\, \varphi\in\A\} .$$
\end{proposition}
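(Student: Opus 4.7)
The plan is to exploit two facts about the isometry $\T_H$. First, translation by $h\in H$ intertwines with pointwise multiplication by the character $(-h,\cdot)$ on $\W$: since $(h,h^*)=1$ for every $h^*\in H^*$,
\[
\T_H(t_h f)(\w)=\bigl\{(-h,\w+h^*)\widehat{f}(\w+h^*)\bigr\}_{h^*\in H^*}=(-h,\w)\,\T_H f(\w).
\]
Second, $\W$ is a fundamental domain for $\G/H^*$ and $\widehat{\G/H^*}\approx H$, so the family $\{(-h,\cdot)\}_{h\in H}$ is, up to a Haar normalization, an orthonormal basis of $L^2(\W)$. Together these translate pointwise orthogonality of the fibers in $\ell^2(H^*)$ into global orthogonality in $L^2(G)$ and back.

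I would handle both assertions at once. Fix a countable generating set $\A$ for $S$ (using separability of $L^2(G)$ if none is given) and set
\[
K(\w):=\clspan\{\T_H\p(\w):\p\in\A\}\subset\ell^2(H^*).
\]
For a finite combination $g=\sum_i c_i\,t_{h_i}\p_i$ with $\p_i\in\A$ and $h_i\in H$, the intertwining gives $\T_H g(\w)=\sum_i c_i(-h_i,\w)\T_H\p_i(\w)\in K(\w)$ for every $\w$. Any $f\in S$ is an $L^2(G)$-limit of such combinations, and because $\T_H$ is an isometry, $\T_H f_n\to\T_H f$ in $L^2(\W,\ell^2(H^*))$; passing to an a.e.\ convergent subsequence and using that $K(\w)$ is closed yields $\T_H f(\w)\in K(\w)$ for a.e.\ $\w$. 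Hence $S\subset\{f:\T_H f(\w)\in K(\w)\ \text{a.e.}\}$.

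For the converse, suppose $f$ satisfies $\T_H f(\w)\in K(\w)$ a.e.\ and take $g\in S^\perp$. For each $\p\in\A$ and each $h\in H$, the relation $g\perp t_h\p$ becomes, via Proposition \ref{prop-tau-H} and the intertwining,
\[
0=\int_\W (h,\w)\,\ip{\T_H g(\w)}{\T_H\p(\w)}_{\ell^2(H^*)}\,dm_\G(\w).
\]
All ``Fourier coefficients'' on $\G/H^*$ of the scalar function $\w\mapsto\ip{\T_H g(\w)}{\T_H\p(\w)}$ vanish, so by the orthonormal-basis property this function is zero a.e. Intersecting the resulting conull sets over the \emph{countable} family $\A$ yields $\T_H g(\w)\perp K(\w)$ a.e.; combined with $\T_H f(\w)\in K(\w)$ a.e., another application of the isometry gives $\ip{f}{g}=0$. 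Hence $f\in(S^\perp)^\perp=S$.

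Finally, $K(\w)=J_H(S)(\w)$ a.e.: the containment $K(\w)\subset J_H(S)(\w)$ holds for every $\w$ directly from the definition, while the reverse containment is the statement proved in the second paragraph, applied to the countable total set $\{t_h\p:h\in H,\p\in\A\}$ of $S$. The main technical obstacle is the quantifier interchange in the third paragraph: the conull set on which $\T_H g\perp\T_H\p$ depends on $\p$, so the countability of $\A$ is essential in order to extract a single conull set on which orthogonality to all of $K(\w)$ holds. The remaining steps are routine given the isometry of $\T_H$ and the duality between $\G/H^*$ and $H$.
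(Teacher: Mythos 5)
The paper itself gives no proof of this proposition: it is quoted from \cite{CP09} as background, so the only meaningful comparison is with the standard Helson--Bownik range-function argument that \cite{CP09} adapts to LCA groups. Your proof is essentially that argument, and its core is sound: the intertwining $\T_H(t_hf)(\w)=(-h,\w)\T_H f(\w)$, the fact that the characters $\{(h,\cdot)\}_{h\in H}$ form an orthogonal basis of $L^2(\W)\approx L^2(\G/H^*)$ because $\widehat{\G/H^*}\approx H$, the a.e.-convergent-subsequence step giving $S\subseteq\{f:\T_H f(\w)\in K(\w)\text{ a.e.}\}$, and the duality step through $S^\perp$ using the uniqueness theorem for Fourier coefficients on the compact group $\G/H^*$ together with the countability of $\A$, are all correct. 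In particular the substantive assertion, $S=\{f\in L^2(G):\T_H f(\w)\in K(\w)\text{ a.e.}\}$ with $K(\w)=\clspan\{\T_H \p(\w):\p\in\A\}$, is fully established.

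The step that does not hold as written is the last one, the inclusion $J_H(S)(\w)\subseteq K(\w)$ a.e. As the paper defines it, $J_H(S)(\w)$ is the closure of $\{\T_H f(\w):f\in S\}$ over \emph{all} $f\in S$, an uncountable family, whereas your second paragraph produces, for each individual $f\in S$, a conull set $W_f$ depending on $f$ on which $\T_H f(\w)\in K(\w)$. Passing from ``for every $f$ there is a conull set'' to ``there is one conull set that works for every $f$'' is exactly the quantifier interchange you rightly flag as the crux of the $S^\perp$ step, and appealing to ``the countable total set $\{t_h\p\}$'' does not resolve it here: for a fixed $\w$, approximating $f$ by finite combinations of the $t_h\p$ in the $L^2(G)$ norm gives no control whatsoever on the single fiber $\T_H f(\w)$, so the set $\{\T_H f(\w):f\in S\}$ can strictly exceed the span of the fibers of those combinations at that $\w$. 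The standard repair --- and the reason range functions are set up as they are in Helson, Bownik and \cite{CP09} --- is either to define $J_H(S)$ from the outset through a fixed countable dense subset of $S$ (after which your argument plus one more countable intersection closes the loop), or to prove that two measurable range functions determining the same closed subspace of $L^2(\W,\ell^2(H^*))$ agree a.e., via the measurable field of projections $\w\mapsto P_{K(\w)}$ evaluated on a countable dense subset of $L^2(\W,\ell^2(H^*))$. One of these ingredients must be supplied; without it the final identification $J_H(S)(\w)=K(\w)$ is not justified.
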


In this article we will use  fiberizations techniques for a  more general case, since the subspaces will be invariant under a closed subgroup which is not necessarily discrete.

The above results from \cite{CP09} can be extended straightforward to the case that the spaces are invariant under
a closed subgroup $M$ of $G$ containing a countable uniform lattice $H$ as follows.

Since $H\subset M$, we have that $M^*\subset H^*$ and, in particular, $M^*$ is discrete.
Thus, there exists  a countable section  $\SN$ of $H^*/M^*$.  Then, the set given by
\begin{equation}
\SR=\bigcup_{\s\in\SN} \W+\s
\end{equation}
is a $\sigma$-finite measurable section of the quotient $\G/M^*$. Using this section of $\G/M^*$ it is possible to obtain, in a way analogous to  the discrete case, the following.

\begin{proposition}\label{prop-tau-M}\noindent
\begin{enumerate}
\item [i)] \label{item-1-prop-tau}
The mapping $\T_M:L^2(G)\longrightarrow L^2(\SR,\ell^2(M^*))$ defined as
$$\T_M f(\dd)=\{\widehat{f}(\dd+m^*)\}_{m^*\in M^*}$$
is an isomorphism that satisfies
$\|\T_M f\|_2=m_{M^*}(\{0\})^{1/2}\|f\|_{L^2(G)}.$
\item [ii)]  \label{item-2-prop-tau}
Let $S$ be an $M$-invariant space generated by a countable set $\A$.
For each $\dd\in\SR$, define the \textnormal{$M$-fiber space} of $S$ at $\dd$ as
$$J_M(S)(\dd)=\clspan\{\T_M \varphi(\dd):\, \varphi\in\A\} .$$
 If $P$ and $P_{\dd}$ are the orthogonal projections onto $S$ and $J_M(S)(\dd)$ respectively, then, for every $g\in L^2(G)$,
$$\T_M(Pg)(\dd)=P_{\dd}(\T_Mg(\dd))\quad\text{a.e. }\dd\in\SR.$$
\item [iii)] \label{item-3-prop-tau}
If $S$ is an $M$-invariant space in $L^2(G)$, then
$$S=\{f \in L^2(G) :\T_M f(\dd)\in J_M(S)(\dd) \text{ for a.e. }\dd\in\SR \}.$$

\end{enumerate}
\end{proposition}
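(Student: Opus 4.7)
The plan is to mimic the proofs given in \cite{CP09} for the $H$-invariant case, exploiting the fact that, because $H \subset M$, the annihilator $M^*$ is contained in the discrete group $H^*$ and is therefore itself a closed discrete subgroup of $\G$. The only genuinely new ingredient is that the section $\SR$ of $\G/M^*$ is $\sigma$-finite rather than of finite measure; once it is verified that $\SR$ is a measurable section, every step is parallel. The fact that $\SR$ is a section of $\G/M^*$ should be checked first: any $\g \in \G$ admits, up to a null set, a unique decomposition $\g = \w + \s + m^*$ with $\w \in \W$, $\s \in \SN$, $m^* \in M^*$, giving the disjoint tiling $\G = \bigsqcup_{m^* \in M^*}(\SR + m^*)$.

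For part (i), I would use Plancherel together with this tiling:
\begin{equation*}
\|f\|_{L^2(G)}^2 = \|\fhat\,\|_{L^2(\G)}^2 = \int_{\SR}\sum_{m^* \in M^*}|\fhat(\dd + m^*)|^2\,dm_{\G}(\dd).
\end{equation*}
Since the Haar measure on the discrete group $M^*$ is $m_{M^*}(\{0\})$ times counting measure, the inner sum equals $m_{M^*}(\{0\})^{-1}\|\T_M f(\dd)\|_{\ell^2(M^*)}^2$, yielding the claimed isometry. Surjectivity follows by lifting any $\Phi \in L^2(\SR,\ell^2(M^*))$ to a function $F \in L^2(\G)$ via $F(\dd + m^*) = \Phi(\dd)(m^*)$ and taking its inverse Fourier transform.

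For part (ii), the key observation is that $M$-translation acts \emph{diagonally} on $M$-fibers: for $m \in M$ and any $m^* \in M^*$ one has $(-m, m^*) = 1$, so
\begin{equation*}
\T_M(t_m f)(\dd)(m^*) = (-m, \dd + m^*)\fhat(\dd + m^*) = (-m,\dd)\,\T_M f(\dd)(m^*).
\end{equation*}
Hence $\T_M(t_m f)(\dd) = (-m,\dd)\T_M f(\dd)$, which shows that the fiber space $J_M(S)(\dd)$ does not depend on the choice of generators of $S$ under $M$-translations, and that the map $\dd \mapsto J_M(S)(\dd)$ is a measurable range function (its measurability follows from the countability of $\A$). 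The projection identity is then obtained by the standard argument: decompose $g = Pg + (I-P)g$; at the fiber level, $\T_M(Pg)(\dd) \in J_M(S)(\dd)$ for a.e.~$\dd$, and one checks that $\T_M((I-P)g)(\dd) \perp J_M(S)(\dd)$ for a.e.~$\dd$ by polarizing the isometry of part (i) against the generators $t_m\varphi$ with $\varphi \in \A$, $m \in M$, and arguing by density.

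Part (iii) is then a direct corollary: if $f \in S$ then $Pf = f$ and $\T_M f(\dd) = P_{\dd}\T_M f(\dd) \in J_M(S)(\dd)$ a.e.; conversely, if $\T_M f(\dd) \in J_M(S)(\dd)$ a.e., then $P_{\dd}\T_M f(\dd) = \T_M f(\dd)$ and (ii) combined with the injectivity of $\T_M$ in (i) forces $Pf = f$, so $f \in S$. The main obstacle I anticipate is verifying the measurability of the range function on the $\sigma$-finite domain $\SR$ and confirming that the orthogonal projection $P_\dd$ varies measurably in $\dd$, so that the fiberwise identities genuinely lift to equalities in $L^2(\SR,\ell^2(M^*))$; the remainder of the work is a routine transcription of Bownik-type arguments adapted to the present setting.
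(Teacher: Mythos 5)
Your proposal is correct and follows exactly the route the paper intends: the paper omits the proof of Proposition \ref{prop-tau-M}, stating only that the results of \cite{CP09} ``can be extended straightforward'' using the $\sigma$-finite section $\SR$ of $\G/M^*$, and your argument (tiling plus Plancherel for the isometry, the diagonal action $\T_M(t_mf)(\dd)=(-m,\dd)\T_Mf(\dd)$ for the range-function machinery, and (iii) as a corollary of (ii)) is precisely that extension. The normalization $m_{M^*}(\{0\})^{1/2}$ you obtain is consistent with the paper's convention for the $\ell^2(M^*)$ inner product, as seen in its definition of the Gramian in (\ref{gram}).
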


\subsection{Principal $M$-invariant Spaces}\noindent

We prove now the following  characterization of principal $M$-invariant spaces. This result extend the $\R^d$ case.

\begin{theorem}
Let $f\in L^2(G)$ and $M$ a closed subgroup of $G$ containing $H$.
If $g\in S_M(f)$, then there exists an $M^*$-periodic function
$\eta$ such that $\widehat{g}=\eta\widehat{f}$.

Conversely, if $\eta$ is an $M^*$-periodic function such that $\eta\widehat{f}\in L^2(\G)$, then the
function $g$ defined by $\widehat{g}=\eta\widehat{f}$ belongs to $S_M(f)$.
\label{rango-SIS-M-2}
\end{theorem}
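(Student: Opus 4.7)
The plan is to exploit part ii)/iii) of Proposition \ref{prop-tau-M}, which reduces membership in $S_M(f)$ to a pointwise condition on fibers: $g\in S_M(f)$ if and only if $\T_Mg(\dd)\in J_M(S_M(f))(\dd)$ for almost every $\dd\in\SR$. Since $S_M(f)$ is generated by the single function $f$, the fiber space is
\[
J_M(S_M(f))(\dd)=\clspan\{\T_M f(\dd)\}\SUBSET\ell^2(M^*),
\]
which is (at most) one-dimensional. So membership of $g$ in $S_M(f)$ amounts to saying that $\T_M g(\dd)$ is a scalar multiple of $\T_M f(\dd)$ for a.e.\ $\dd$; the scalar will be the value of our periodic function $\eta$ on the section $\SR$.

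For the forward direction, assume $g\in S_M(f)$. On the measurable set $A=\{\dd\in\SR:\T_M f(\dd)\ne 0\}$ define
\[
\eta(\dd)=\frac{\ip{\T_M g(\dd)}{\T_M f(\dd)}}{\|\T_M f(\dd)\|^2_{\ell^2(M^*)}},
\]
and set $\eta(\dd)=0$ on $\SR\setminus A$; this is a measurable function on $\SR$ because the inner product and norm of the fibers depend measurably on $\dd$. By the one-dimensional nature of the fiber space together with Proposition \ref{prop-tau-M}, we get $\T_M g(\dd)=\eta(\dd)\,\T_M f(\dd)$ for a.e.\ $\dd\in\SR$ (on $\SR\setminus A$ both sides vanish, since the fiber space is trivial there and $g\in S_M(f)$). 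Unfolding the definition of $\T_M$, this means $\widehat{g}(\dd+m^*)=\eta(\dd)\,\widehat{f}(\dd+m^*)$ for every $m^*\in M^*$ and a.e.\ $\dd\in\SR$. Now extend $\eta$ from $\SR$ to all of $\G$ by $M^*$-periodicity: since $\SR$ is a section of $\G/M^*$, each $\gamma\in\G$ has a unique decomposition $\gamma=\dd+m^*$, and we set $\eta(\gamma):=\eta(\dd)$. The extension is measurable and $M^*$-periodic by construction, and we obtain $\widehat{g}(\gamma)=\eta(\gamma)\widehat{f}(\gamma)$ a.e.\ on $\G$.

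For the converse, assume $\eta$ is $M^*$-periodic with $\eta\widehat{f}\in L^2(\G)$ and let $g\in L^2(G)$ be defined by $\widehat{g}=\eta\widehat{f}$. For any $\dd\in\SR$ and any $m^*\in M^*$, periodicity gives
\[
\widehat{g}(\dd+m^*)=\eta(\dd+m^*)\widehat{f}(\dd+m^*)=\eta(\dd)\widehat{f}(\dd+m^*),
\]
so $\T_M g(\dd)=\eta(\dd)\,\T_M f(\dd)\in\clspan\{\T_M f(\dd)\}=J_M(S_M(f))(\dd)$. Applying part iii) of Proposition \ref{prop-tau-M} to the $M$-invariant space $S_M(f)$ yields $g\in S_M(f)$.

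The only subtlety is the forward direction: writing down a measurable $\eta$ on the (possibly null) set where $\T_M f(\dd)$ vanishes, and making sure the piece of $\widehat{g}$ sitting over that zero set is also zero so that the formula $\widehat{g}=\eta\widehat{f}$ holds globally. Both issues are handled by the projection-style formula for $\eta(\dd)$ above together with the fiber characterization, so no serious obstacle arises beyond bookkeeping with the section $\SR$.
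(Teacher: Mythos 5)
Your proof is correct and follows essentially the same route as the paper: both define $\eta$ on the section $\SR$ by the projection coefficient $\ip{\T_M g(\dd)}{\T_M f(\dd)}/\norm{\T_M f(\dd)}^2$ onto the (at most) one-dimensional fiber space, extend it $M^*$-periodically to $\G$, and use the fiber characterization in Proposition \ref{prop-tau-M} for both directions. No substantive differences to report.
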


\begin{proof}
Let us call $S=S_M(f)$ and let $P$ and $P_{\dd}$ be the orthogonal projections onto $S$ and $J_M(S)(\dd)$ respectively. Given
$g\in S$, we first define $\eta_g$ in $\SR$ as
$$\eta_g(\dd)=
\begin{cases}
\frac{\langle \T_M g(\dd), \T_M f(\dd)\rangle}{\|\T_M f(\dd)\|_2^2} & \textrm{if}\,\, \dd\in\ E_f\\
0 & \textrm{otherwise},
\end{cases}
$$
where $E_f$ is the set $\{ \dd\in\SR\,:\, \|\T_M f(\dd)\|_2^2\neq 0\}$.  Then, since $\{\SR+m^*\}_{m^*\in M^*}$ forms a partition of $\G$, we can  extend $\eta_g$ to all $\G$ in an $M^*$-periodic way.

Now, by Proposition \ref{prop-tau-M} we have that
$$\T_M g(\dd)=\T_M (Pg)(\dd)=P_{\dd}(\T_Mg(\dd))=\eta_g(\dd)\T_Mf(\dd).$$

Since $\eta_g$ is an $M^*$-periodic function, $\widehat{g}=\eta_g\widehat{f}$ as we wanted to prove.

Conversely, if $\widehat{g}=\eta\widehat{f}$, with $\eta$ an $M^*$-periodic function, then $\T_M g(\dd)=\eta(\dd)\T_M f(\dd)$. By Proposition \ref{prop-tau-M},
 $g\in S$.
\end{proof}

\section{Characterization of  M-invariance}\label{sec-5-0}

If $H\subset M\subset G$, where $H$ is a countable uniform lattice in $G$ and $M$ is a closed subgroup of $G$,  we are interested in describing  when an $H$-invariant space $S$ is also $M$-invariant.

Let $\W$ be a measurable section of $\G/H^*$ and $\SN$ a countable section of $H^*/M^*$.
For $\s\in\SN$
we define the set  $B_{\s}$ as
\begin{equation}\label{B-sigma}
B_{\s}=\W+\s+M^*=\bigcup_{m^*\in M^*}(\W+\s)+m^*.
\end{equation}
Therefore, each $B_{\s}$ is an $M^*$-periodic set.

Since $\W$ tiles $\G$ by $H^*$ translations and $\SN$ tiles $H^*$ by $M^*$ translations, it follows that $\{B_{\s}\}_{\s\in\SN}$ is a partition of $\G$.

Now, given an $H$-invariant space $S$,  for each $\s\in \SN$, we define  the subspaces
\begin{equation}\label{U-sigma}
U_{\s}=\{f\in L^2(G):\widehat{f}=\chi_{B_{\s}}\widehat{g},
\,\,\textrm{with} \,\,g\in S\}.
\end{equation}

\subsection{Characterization of $M$-invariance in terms of subspaces}\noindent

The main theorem of this section characterizes the $M$-invariance of $S$ in terms of
the subspaces $U_{\s}$.

\begin{theorem}\label{teo-subs}
If $S\subseteq L^2(G)$ is an $H$-invariant space and $M$ is a closed subgroup of $G$ containing $H$, then the following are equivalent.
\begin{enumerate}
\item [i)]$S$ is $M$-invariant.
\item [ii)]$U_{\s}\subseteq S$ for all $\s\in \SN$.
\end{enumerate}
Moreover, in case any of these hold we have that $S$ is the orthogonal direct sum
$$S=\dotbigoplus_{\s\in \SN} U_{\s}.$$
\end{theorem}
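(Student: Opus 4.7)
The plan is to prove the implication (i)$\Rightarrow$(ii) by a direct appeal to the converse of Theorem \ref{rango-SIS-M-2}, and then to prove (ii)$\Rightarrow$(i) by translating the problem to $H$-fibers via Proposition \ref{bownik-1-H} and exploiting the fact that $(-m,\cdot)$ is $M^*$-periodic for $m\in M$. The orthogonal direct sum decomposition will then drop out from the fact that the sets $B_\s$ partition $\G$.

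For (i)$\Rightarrow$(ii), suppose $S$ is $M$-invariant and let $f\in U_\s$, so $\widehat f=\chi_{B_\s}\widehat g$ for some $g\in S$. Since $B_\s$ is $M^*$-periodic, so is the function $\chi_{B_\s}$, and $\chi_{B_\s}\widehat g\in L^2(\G)$. The converse part of Theorem \ref{rango-SIS-M-2} then yields $f\in S_M(g)$. Since $g\in S$ and $S$ is $M$-invariant, $S_M(g)\subseteq S$, hence $f\in S$ as required.

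For (ii)$\Rightarrow$(i), fix $f\in S$ and $m\in M$; the goal is to show $t_m f\in S$, which by Proposition \ref{bownik-1-H} is equivalent to $\T_H(t_m f)(\w)\in J_H(S)(\w)$ a.e.\ $\w\in\W$. Every $h^*\in H^*$ has a unique expression $h^*=\s+m^*$ with $\s\in\SN$ and $m^*\in M^*$; since $(m,m^*)=1$, a direct computation gives
\[
\T_H(t_m f)(\w)_{h^*}=(-m,\w+h^*)\widehat f(\w+h^*)=(-m,\w+\s)\,\T_H f(\w)_{h^*}.
\]
Now for each $\s\in\SN$ let $f_\s\in L^2(G)$ be defined by $\widehat{f_\s}=\chi_{B_\s}\widehat f$; a quick check using $B_{\s'}\cap(\W+\s)=\emptyset$ for $\s'\neq\s$ shows that $\T_H f_\s(\w)$ equals the projection $P_\s\T_H f(\w)$ of $\T_H f(\w)$ onto $\ell^2(\s+M^*)\subset \ell^2(H^*)$. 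By hypothesis (ii), $f_\s\in U_\s\subseteq S$, so $\T_H f_\s(\w)\in J_H(S)(\w)$ a.e. Substituting,
\[
\T_H(t_m f)(\w)=\sum_{\s\in\SN}(-m,\w+\s)\,\T_H f_\s(\w),
\]
where the series converges in $\ell^2(H^*)$ because the summands have pairwise disjoint supports and $\sum_\s\|\T_H f_\s(\w)\|^2=\|\T_H f(\w)\|^2<\infty$. Since $J_H(S)(\w)$ is a closed linear subspace, the sum lies in $J_H(S)(\w)$, and hence $t_m f\in S$.

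For the ``moreover'' statement, note that the functions in distinct $U_\s$ and $U_{\s'}$ have Fourier transforms supported in the disjoint sets $B_\s$ and $B_{\s'}$, so the family $\{U_\s\}_{\s\in\SN}$ is mutually orthogonal; each $U_\s$ is contained in $S$ by (ii). Conversely, for any $f\in S$ the decomposition $\widehat f=\sum_{\s\in\SN}\chi_{B_\s}\widehat f$ (valid because the $B_\s$ partition $\G$) writes $f=\sum_\s f_\s$ with $f_\s\in U_\s$ (using $g=f\in S$ in the definition of $U_\s$). This gives $S=\dotbigoplus_{\s\in\SN}U_\s$. The one place where some care is needed is the convergence argument in the display above, which is what forces us to pass through the $H$-fiber characterization rather than argue a single $U_\s$ is $M$-invariant in isolation; this is the only non-routine step.
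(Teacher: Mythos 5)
Your proof is correct, and the second half takes a genuinely different route from the paper's. The direction i)\,$\Rightarrow$\,ii) is identical to the paper's: both apply the converse part of Theorem \ref{rango-SIS-M-2} with the $M^*$-periodic multiplier $\chi_{B_{\s}}$. For ii)\,$\Rightarrow$\,i), the paper first proves a separate lemma (Lemma \ref{U}) asserting that each $U_{\s}$ contained in $S$ is itself $M$-invariant: it constructs an $H^*$-periodic function $\ell_m$ agreeing with the character $(m,\cdot)$ a.e.\ on $B_{\s}$ and feeds it into Theorem \ref{rango-SIS-M-2} for the lattice $H$; it then concludes that $S=\dotbigoplus_{\s\in\SN}U_{\s}$ is $M$-invariant as an orthogonal sum of $M$-invariant pieces. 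You instead bypass that lemma and argue at the level of $H$-fibers via Proposition \ref{bownik-1-H}, expressing $\T_H(t_mf)(\w)$ as an $\ell^2$-convergent sum $\sum_{\s}(-m,\w+\s)\,\T_Hf_{\s}(\w)$ of scalar multiples of fibers already known to lie in the closed subspace $J_H(S)(\w)$. Both arguments hinge on the same observation --- for $m\in M$ the character $(-m,\cdot)$ is constant on each coset $\w+\s+M^*$, so it acts as a scalar on the fiber of $f_{\s}$ --- but package it differently: the paper converts it into an $H^*$-periodic multiplier and obtains the stronger intermediate fact that each $U_{\s}$ is $M$-invariant (useful on its own, e.g.\ it makes the final step one line), while your version is more self-contained at the cost of handling the $\ell^2$ convergence and an implicit countable union of null sets (one exceptional set per $\s\in\SN$, harmless since $\SN$ is countable) by hand. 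One detail you leave implicit in the ``moreover'' part: for $\dotbigoplus_{\s}U_{\s}$ to be an orthogonal decomposition into closed subspaces one should note each $U_{\s}$ is closed; under ii) this is immediate because $U_{\s}$ coincides with the intersection of $S$ with the closed subspace of functions whose Fourier transform is supported in $B_{\s}$, whereas the paper proves closedness directly inside Lemma \ref{U}.
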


Now we state a lemma that we need  to prove Theorem \ref{teo-subs}.

\begin{lemma}\label{U}
Let $S$ be an $H$-invariant space and  $\s\in\SN$. Assume that the subspace $U_{\s}$ defined in (\ref{U-sigma}) satisfies $U_{\s}\subseteq S$. Then, $U_{\s}$ is an $M$-invariant space and in particular is $H$-invariant.
\end{lemma}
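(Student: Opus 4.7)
The plan is to fix $f \in U_\s$ and $m \in M$ and show $t_m f \in U_\s$. The first step is observational: since $\widehat{t_m f}(\g) = (-m,\g)\,\widehat{f}(\g)$ has the same support as $\widehat{f}$, it remains concentrated in $B_\s$, so $\widehat{t_m f} = \chi_{B_\s}\widehat{t_m f}$. Consequently, once I know that $t_m f\in S$, the definition (\ref{U-sigma}) (with $g=t_m f$) places $t_m f$ in $U_\s$. This reduces the whole lemma to proving $t_m f \in S$, using only the $H$-invariance of $S$ together with the fact that $\widehat{f}$ is supported in the $M^*$-periodic set $B_\s$.

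To establish $t_m f \in S$, I would use the $H$-range-function characterization from Proposition \ref{bownik-1-H}: it suffices to check that $\T_H(t_m f)(\w) \in J_H(S)(\w)$ for a.e.\ $\w \in \W$. The key computation is that $\T_H(t_m f)(\w)$ is in fact a \emph{scalar multiple} of $\T_H f(\w)$. Indeed, because $\widehat{f}$ vanishes off $B_\s$, only the indices $h^* \in \s + M^*$ contribute nonzero entries to $\T_H f(\w)$. On such an index $h^* = \s + m^*$ with $m^*\in M^*$, the annihilator identity $(m,m^*)=1$ (valid because $m \in M$) yields
\begin{equation*}
(-m,\w+\s+m^*) \EQ (-m,\w+\s),
\end{equation*}
independent of $m^*$. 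Hence $\T_H(t_m f)(\w) = (-m,\w+\s)\,\T_H f(\w)$, coordinate by coordinate in $\ell^2(H^*)$.

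Since $f\in U_\s \subseteq S$, Proposition \ref{bownik-1-H} gives $\T_H f(\w) \in J_H(S)(\w)$ for a.e.\ $\w$, and because each fiber is a closed linear subspace, the scalar multiple $\T_H(t_m f)(\w)$ lies in $J_H(S)(\w)$ as well. Applying Proposition \ref{bownik-1-H} once more gives $t_m f \in S$, which by the opening reduction puts $t_m f$ in $U_\s$. The main conceptual point---the one I expect to be the only real obstacle---is the observation that $(-m,\cdot)$ is constant along each coset $\w+\s+M^*$ when $m\in M$; once this is noted, everything else is routine. The $H$-invariance of $U_\s$ then follows immediately from $M$-invariance, since $H\subseteq M$.
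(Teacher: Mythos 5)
Your argument for the translation invariance of $U_{\s}$ is correct, and it is in essence the paper's own argument recast one level down. The paper fixes $m\in M$, writes $\widehat f=\chi_{B_{\s}}\widehat g$, and builds an explicit $H^*$-periodic function $\ell_m$ with $\ell_m(\w+h^*)=(m,\w+\s)$, which agrees with $(m,\cdot)$ on $B_{\s}$ precisely because $(m,\cdot)$ is $M^*$-periodic; it then invokes Theorem \ref{rango-SIS-M-2} for the lattice $H$ to conclude $\ell_m\widehat g\in\widehat{S_H(g)}\subset\widehat S$. Your version works directly with the $H$-fibers of $f$ itself: the same $M^*$-periodicity of the character shows $\T_H(t_mf)(\w)=(-m,\w+\s)\,\T_H f(\w)$, and Proposition \ref{bownik-1-H} finishes the job. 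The key observation you isolate --- that $(m,\cdot)$ is constant on each coset $\w+\s+M^*$ --- is exactly the content of the paper's equation (\ref{ecu-2-lema-U}); you simply bypass the intermediate multiplier theorem by noting that on each fiber the multiplier acts as a scalar, which keeps you inside the linear subspace $J_H(S)(\w)$. Both routes are sound; yours is marginally more self-contained at the fiber level, the paper's makes the $H^*$-periodic multiplier explicit, which is reused in spirit elsewhere (e.g.\ in the proof of the exact-invariance theorem). Your reduction at the start (support of $\widehat{t_mf}$ equals support of $\widehat f$, so membership in $U_{\s}$ follows from $t_mf\in S$ with $g=t_mf$ in (\ref{U-sigma})) is also fine.

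There is, however, one genuine omission: you never prove that $U_{\s}$ is \emph{closed}. In the paper's terminology (Section \ref{sec-2}) a $K$-invariant space is by definition a closed subspace, and the paper's proof devotes its first part to exactly this point: if $f_j\in U_{\s}$ and $f_j\to f$, then $f\in S$ because $U_{\s}\subseteq S$ and $S$ is closed, and the identity
$$\|\widehat{f_j}-\widehat f\|_2^2=\|\widehat{f_j}-\widehat f\chi_{B_{\s}}\|_2^2+\|\widehat f\chi_{B_{\s}^c}\|_2^2$$
forces $\widehat f=\widehat f\chi_{B_{\s}}$, whence $f\in U_{\s}$. This closedness is not decorative: it is needed later for the orthogonal direct sum decomposition in Theorem \ref{teo-subs} and for forming the fiber spaces $J_H(U_{\s})$. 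Your proof should include this step (or at least note that it follows from $U_{\s}=P_{\s}(S)\cap S$ with $P_{\s}$ the orthogonal projection onto $\{f:\supp\widehat f\subset B_{\s}\}$, using $U_{\s}\subseteq S$).
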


\begin{proof}
Let us prove first that $U_{\s}$ is closed.
Suppose that $f_j\in U_{\s}$ and $f_j\rightarrow f$ in $L^2(G)$.
Since $U_{\s}\subseteq S$  and $S$ is closed, $f$ must be in $S$.
Further,
$$\|\widehat{f_j}-\widehat{f}\|^2_2=\|(\widehat{f_j}-\widehat{f})\chi_{B_{\s}}\|^2_2+\|(\widehat{f_j}-\widehat{f})\chi_{B_{\s}^c}\|^2_2
	=\|\widehat{f_j}-\widehat{f}\chi_{B_{\s}}\|^2_2+\|\widehat{f}\chi_{B_{\s}^c}\|^2_2.$$

Since the left-hand side converges to zero, we must have that $\widehat{f}\chi_{B_{\s}^c}=0$ a.e.
$\g\in\G$. Then,
$\widehat{f}=\widehat{f}\chi_{B_{\s}}.$
Consequently $f\in U_{\s}$, so $U_{\s}$ is closed.

Now we show that $U_{\s}$ is $M$-invariant.
Given $m\in M$ and $f\in U_{\s}$, we will prove that
$  (m,\cdot)\widehat{f}(\cdot)\in \widehat{U}_{\s}.$

Since $f\in U_{\s}$,
there exists $g\in S$ such that $\widehat{f}=\chi_{B_{\s}}\widehat{g}$.
Hence,
\begin{equation}\label{ecu-1-lema-U}
(m,\cdot)\widehat{f}(\cdot)=(m,\cdot)(\chi_{B_{\s}}\widehat{g})(\cdot)=\chi_{B_{\s}}(\cdot)((m,\cdot)\widehat{g}(\cdot)).
\end{equation}
If we were able to find an $H^*$-periodic function $\ell_m$ verifying
\begin{equation}\label{ecu-2-lema-U}
(m,\g)=\ell_m(\g)\quad\textrm{a.e.}\,\g\in B_{\s},
\end{equation}
then, we can rewrite (\ref{ecu-1-lema-U}) as
$$(m,\cdot)\widehat{f}(\cdot)=\chi_{B_{\s}}(\cdot)(\ell_m\widehat{g})(\cdot).$$
Theorem \ref{rango-SIS-M-2} can then be applied for the uniform lattice $H$. Thus, since $\ell_m$ is $H^*$-periodic, we obtain that $\ell_m\widehat{g}\in \widehat{S_H(g)}\subset\widehat{S}$
and so, $(m,\cdot)\widehat{f}(\cdot)\in \widehat{U}_{\s}.$

Now we define the function $\ell_m$ as follows.
For each $h^*\in H^*$, set
\begin{equation}\label{ecu-4-lema-U}
\ell_m(\w+h^*)=(m,\w+\s)\quad\textrm{a.e.}\,\,\w\in\W.
\end{equation}
It is clear that $\ell_m$ is $H^*$-periodic.

Since $(m, \cdot)$ is $M^*$-periodic,
\begin{equation*}
(m,\w+\s)=(m,\w+\s+m^*)\quad\textrm{a.e.}\,\,\w\in\W,\, \forall\,m^*\in M^*.
\end{equation*}
Thus, (\ref{ecu-2-lema-U}) holds.

Note that, since $H\subset M$, the $H$-invariance of $U_{\s}$ is a consequence of the $M$-invariance.

\end{proof}

\begin{proof}[Proof of Theorem \ref{teo-subs}]
i)\,$\Rightarrow$\,ii): Fix $\s\in \SN$ and $f\in U_{\s}$. Then $\widehat{f}=\chi_{B_{\s}}\widehat{g}$
for some $g\in S.$ Since $\chi_{B_{\s}}$ is an $M^*$-periodic function, by Theorem \ref{rango-SIS-M-2}, we have that $f\in S_M(g)\subseteq S$, as we wanted to prove.

ii)\,$\Rightarrow$\,i): Suppose that $U_{\s}\subseteq S$ for all $\s\in \SN$.
Note that Lemma \ref{U} implies that
$U_{\s}$ is $M$-invariant, and we also have that the $U_{\s}$ are mutually orthogonal
since the sets $B_{\s}$ are disjoint.

Suppose that $f\in S$. Then, since $\{B_{\s}\}_{\s\in \SN}$ is a partition of $\G$, it follows that
$\widehat{f}=\sum_{\s\in\SN}\widehat{f}\chi_{B_{\s}}$.
This implies that $f\in \dotbigoplus_{\s\in \SN} U_{\s}$ and
consequently, $S$ is the orthogonal direct sum
$$S=\dotbigoplus_{\s\in \SN} U_{\s}.$$
As each $U_{\s}$ is $M$-invariant, so is $S$.
\end{proof}

\subsection{Characterization of $M$-invariance in terms of $H$-fibers}\noindent

In this section we will first express the conditions of  Theorem \ref{teo-subs} in terms of $H$-fibers.
Then, we will give a useful characterization of the $M$-invariance for a finitely generated  $H$-invariant space in terms of the Gramian.

If $f\in L^2(G)$ and $\s\in \SN$, we define the function $f^{\s}$ by
$$\widehat{f^{\s}}=\widehat{f}\chi_{B_{\s}}.$$

Let $P_{\s}$ be the orthogonal projection onto $S_{\s}$, where
$$S_{\s}=\{f\in L^2(G)\,:\,\text{ supp}(\widehat{f})\subset B_{\s}\}.$$
Therefore
$$f^{\s}=P_{\s}f\quad\text{and}\quad U_{\s}=P_{\s}(S)=\{f^{\s}\,:\,f\in S\}.$$
Moreover, if $S=S_H(\A)$ with $\A$ a countable subset of $L^2(G)$, then
\begin{equation}\label{JUsigma}
J_H(U_{\s})(\w)=\overline{\textnormal{span}}\{\T_H(\varphi^{\s})(\w)\,:\,\varphi\in\A\}.
\end{equation}

\begin{remark}\label{fibra-cut-off}
Note that the fibers
$$\T_H(\varphi^{\s})(\w)=\{\chi_{B_{\s}}(\w+h^*)\widehat{\varphi}(\w+h^*)\}_{h^*\in H^*}$$
can be described in a simple way as
\begin{equation*}
\chi_{B_{\s}}(\w+h^*)\widehat{\p}(\w+h^*)=
\begin{cases}
\widehat{\p}(\w+h^*)&\text{ if }h^*\in\s+M^*\\
0&\text{ otherwise }.
\end{cases}
\end{equation*}
Therefore,  if $\s\neq\s'$, $J_H(U_{\s})(\w)$ and $J_H(U_{\s'})(\w)$ are  orthogonal subspaces for a.e. $\w\in\W$.
\end{remark}

Combining Theorem~\ref{teo-subs} with Proposition~\ref{bownik-1-H} and (\ref{JUsigma})
we obtain the following result.
\begin{proposition}\label{fibras-en-S}
Let $S$ be an $H$-invariant space  generated by a countable set $\A\subset L^2(G)$. The following statements are equivalent.
\begin{enumerate}
\item[i)]
$S$ is $M$-invariant.
\item[ii)]
$\T_H(\varphi^{\s})(\w)\in J_H(S)(\w)$ a.e. $\w\in\W$
for all  $\varphi\in\A$ and $\s\in\SN$.
\end{enumerate}
\end{proposition}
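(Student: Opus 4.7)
The plan is to derive this proposition as an immediate combination of three preceding tools: Theorem~\ref{teo-subs} (which reduces $M$-invariance to the inclusions $U_\s\subseteq S$), Proposition~\ref{bownik-1-H} (which characterizes membership in an $H$-invariant space by a fiber condition), and formula~(\ref{JUsigma}) (which identifies $J_H(U_\s)(\w)$ as the closed span of the fibers $\T_H(\varphi^\s)(\w)$).

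For the direction (i)\,$\Rightarrow$\,(ii), I would assume $S$ is $M$-invariant, apply Theorem~\ref{teo-subs} to conclude $U_\s\subseteq S$ for every $\s\in\SN$, and observe that for each $\varphi\in\A$ the function $\varphi^\s$ belongs to $U_\s$ by definition (since $\widehat{\varphi^\s}=\chi_{B_\s}\widehat{\varphi}$ with $\varphi\in S$). Hence $\varphi^\s\in S$, and Proposition~\ref{bownik-1-H} gives $\T_H(\varphi^\s)(\w)\in J_H(S)(\w)$ a.e.\ $\w\in\W$.

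For the direction (ii)\,$\Rightarrow$\,(i), I would use that each $J_H(S)(\w)$ is by construction a closed subspace of $\ell^2(H^*)$, so the hypothesis forces
\[
\clspan\{\T_H(\varphi^\s)(\w):\varphi\in\A\}\subseteq J_H(S)(\w)\quad\text{a.e. }\w\in\W.
\]
By (\ref{JUsigma}) the left-hand side is precisely $J_H(U_\s)(\w)$, hence $J_H(U_\s)(\w)\subseteq J_H(S)(\w)$ a.e. For any $f\in U_\s$ one has $\T_H f(\w)\in J_H(U_\s)(\w)\subseteq J_H(S)(\w)$ a.e., so Proposition~\ref{bownik-1-H} (applied to the $H$-invariant space $S$) places $f$ in $S$. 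Thus $U_\s\subseteq S$ for every $\s\in\SN$, and Theorem~\ref{teo-subs} finishes the proof.

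No real obstacle is expected: the argument is a direct assembly of the three tools. The only bookkeeping point is handling the almost-everywhere clauses, but since $\A$ and $\SN$ are both countable, intersecting the corresponding null sets is harmless, so the simultaneous a.e.\ statements used above are legitimate.
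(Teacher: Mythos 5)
Your proof is correct and follows exactly the route the paper intends: the paper gives no written proof, stating only that the result is obtained "combining Theorem~\ref{teo-subs} with Proposition~\ref{bownik-1-H} and (\ref{JUsigma})", and your argument is precisely that combination spelled out, with the countability of $\A$ and $\SN$ correctly invoked to handle the almost-everywhere clauses.
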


Let $\Phi= \set{\varphi_1,\dots,\varphi_{\ell}}$ be a finite collection of  functions in $L^2(G)$.
Then, the \emph{Gramian} $G_\Phi$ of $\Phi$ is the
$\ell \times \ell$ matrix of $H^*$-periodic functions
\begin{align}
[G_{\Phi}(\omega)]_{ij}
&=
\Big\langle \T_H(\varphi_i)(\w), \T_H(\varphi_j)(\w)\Big\rangle\notag\\
&=m_{H^*}(\{0\}) \sum_{h^* \in H^*} \widehat{\varphi_i}(\omega+h^*) \, \overline{\widehat{\varphi_j}(\omega+h^*)}
\label{gram}
\end{align}
for $ \omega\in \W$.

Given a subspace $V$ of $L^2(G)$, the  {\it dimension function} is defined by
$$\dim_V:\W\rightarrow \N_0\cup\{\infty\}, \quad \dim_V(\w):=\dim(J_H(V)(\w)).$$

We will also need the next result which is a straightforward consequence of Proposition  \ref{prop-tau-H}  and Proposition \ref{bownik-1-H}.

\begin{proposition} \label{ortho}
Let $S_1$ and $S_2$ be $H$-invariant spaces.
If $S = S_1 \dotoplus S_2$, then
$$J_H(S)(\omega)
\EQ J_H(S_1)(\omega) \dotoplus J_H(S_2)(\omega), \quad\text{a.e. }\w\in\W.$$
\end{proposition}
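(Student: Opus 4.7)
The plan is to split the conclusion into the two defining conditions of an orthogonal direct sum at the fiber level, verified pointwise a.e.: (a) the algebraic sum $J_H(S_1)(\w)+J_H(S_2)(\w)$ equals $J_H(S)(\w)$, and (b) $J_H(S_1)(\w)\perp J_H(S_2)(\w)$. Since $G$ is second countable, $L^2(G)$ is separable, so I may fix countable generating sets $\A_1,\A_2$ for $S_1,S_2$. Because $S_1$ and $S_2$ are orthogonal and closed, $S_1\dotoplus S_2$ is closed, hence $S = S_H(\A_1\cup\A_2)$.

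For (b), which is the heart of the matter, I would establish the familiar bracket identity
\begin{equation*}
\langle f, t_h g\rangle_{L^2(G)} \EQ m_{H^*}(\{0\}) \int_\W (h,\w)\,\bigip{\T_H f(\w)}{\T_H g(\w)}_{\ell^2(H^*)}\, dm_\G(\w),\quad h\in H,
\end{equation*}
which follows from Plancherel on $\G$, the tiling $\G=\dotbigcup_{h^*\in H^*}(\W+h^*)$, and the fact that $(h,h^*)=1$ for $h\in H,\, h^*\in H^*$. Applied to $\p_1\in\A_1,\,\p_2\in\A_2$: since $t_h\p_2\in S_2\perp S_1$, the left-hand side vanishes for every $h\in H$. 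The $H^*$-periodic bracket $\w\mapsto\bigip{\T_H\p_1(\w)}{\T_H\p_2(\w)}$ may be regarded as a function on the compact group $\G/H^*$, whose dual is $H$; the vanishing of all its Fourier coefficients forces it to be zero a.e. Since $\A_1\times\A_2$ is countable, a single full-measure set of $\w$'s works for every pair; passing to closed spans, $J_H(S_1)(\w)\perp J_H(S_2)(\w)$ a.e.

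For (a), the inclusion $J_H(S_1)(\w)+J_H(S_2)(\w)\subseteq J_H(S)(\w)$ is immediate from $S_i\subseteq S$. For the reverse, applying Proposition~\ref{bownik-1-H} to the generating set $\A_1\cup\A_2$ gives
\begin{equation*}
J_H(S)(\w) \EQ \clspan\{\T_H\p(\w):\p\in\A_1\cup\A_2\}\quad\text{a.e. }\w\in\W.
\end{equation*}
The right-hand side is the closure of $J_H(S_1)(\w)+J_H(S_2)(\w)$; but by (b) this algebraic sum of two closed subspaces of $\ell^2(H^*)$ is itself closed, so no further closure is needed. Combining (a) and (b) yields the orthogonal decomposition.

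The main technical obstacle is step (b), specifically the rigorous justification of the bracket/Fourier identity and the passage from ``Fourier coefficients vanish for all $h\in H$'' to ``the bracket vanishes a.e.\ on $\W$''; this latter step uses that $\W$ is a measurable section of $\G/H^*$ and that $H\approx\widehat{\G/H^*}$ is a complete set of characters on $\G/H^*$. Once this is in place, the countability coming from second-countability of $G$ lets us upgrade a pointwise-a.e. orthogonality of generators into orthogonality of the closed-span fiber spaces.
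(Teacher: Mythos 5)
Your proof is correct. For comparison: the paper does not actually write out a proof of Proposition \ref{ortho} --- it only remarks that the statement is ``a straightforward consequence of Proposition \ref{prop-tau-H} and Proposition \ref{bownik-1-H}'' --- so you have supplied the argument the authors leave implicit. Your part (a) is precisely the intended consequence of Proposition \ref{bownik-1-H} applied to the countable generating set $\A_1\cup\A_2$. For the pointwise orthogonality in part (b) you use the bracket identity and the uniqueness theorem for Fourier coefficients on the compact group $\G/H^*$ (whose dual is $H$ since $H$ is closed, so $H^{**}=H$); this is sound, including the $L^1$ integrability of $\w\mapsto\ip{\T_H\p_1(\w)}{\T_H\p_2(\w)}$ needed both for the interchange of sum and integral and for the uniqueness theorem. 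An alternative route, closer to the tools the paper cites, is to use the projection formula of Proposition \ref{prop-tau-M}(ii) with $M=H$: from $P=P_1+P_2$ one gets $P_\w=(P_1)_\w+(P_2)_\w$ a.e.\ (testing against the fibers of a countable dense subset of $L^2(G)$), and a sum of two orthogonal projections is itself an orthogonal projection iff their ranges are mutually orthogonal, which yields both (a) and (b) at once. Your version is more self-contained; the projection version leans harder on the machinery already stated. One cosmetic point: with the paper's normalization (compare the Gramian formula (\ref{gram}) and $\|\T_H f\|_2=m_{H^*}(\{0\})^{1/2}\|f\|_2$), the constant in your bracket identity should be $m_{H^*}(\{0\})^{-1}$ rather than $m_{H^*}(\{0\})$; this has no bearing on the argument, since only the vanishing of the integral matters.
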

The converse of this proposition is also true, but will not be needed.

Now we give a slightly simpler
characterization of $M$-invariance for the finitely generated case.

\begin{theorem} \label{fibers}
If $S$ is an $H$-invariant space, finitely generated by $\Phi$, then
the following statements are equivalent.

\begin{enumerate}
\item[i)]
$S$ is $M$-invariant.
\smallskip
\item[ii)]
For almost every $\omega \in \W$,
$\dim_S(\omega)
= \sum_{\s\in\SN} \dim_{U_{\s}}(\omega).$
\smallskip
\item[iii)]
For almost every $\omega \in \W$,
$\textnormal{rank}[G_{\Phi}(\w)]=\sum_{\s\in\SN}\textnormal{rank}[G_{\Phi^{\s}}(\w)],$
where $\Phi^{\s}=\{\varphi^{\s}\,:\,\varphi\in\Phi\}.$
\end{enumerate}
\end{theorem}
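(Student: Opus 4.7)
The plan is to split the argument into the equivalence (ii)$\Leftrightarrow$(iii), which is essentially linear algebra, and the equivalence (i)$\Leftrightarrow$(ii), which is the real content and requires a fiberwise decomposition of $\ell^2(H^*)$ induced by the coset partition $H^*=\bigcup_{\s\in\SN}(\s+M^*)$. For (ii)$\Leftrightarrow$(iii), I would read off formula (\ref{gram}) as saying that $G_{\Phi}(\w)$ is, up to the constant $m_{H^*}(\{0\})$, the Gram matrix in $\ell^2(H^*)$ of the finite collection $\{\T_H(\varphi_i)(\w)\}_{i=1}^{\ell}$, so that $\mathrm{rank}[G_{\Phi}(\w)]$ equals the dimension of their span, which is $\dim_S(\w)$ by Proposition \ref{bownik-1-H}. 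Applying the same identification to $\Phi^{\s}$ through (\ref{JUsigma}) turns $\mathrm{rank}[G_{\Phi^{\s}}(\w)]$ into $\dim_{U_{\s}}(\w)$, and (iii) becomes (ii).

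For the core equivalence, the essential ingredient is Remark \ref{fibra-cut-off}: for $\w\in\W$, the sequence $\T_H(\varphi^{\s})(\w)$ is supported on the coset $\s+M^*\subseteq H^*$. Since these cosets partition $H^*$, one gets the orthogonal Hilbert-space decomposition $\ell^2(H^*)=\bigoplus_{\s\in\SN}\ell^2(\s+M^*)$; write $P^{\s}_{\w}$ for the associated orthogonal projections. They satisfy $P^{\s}_{\w}\T_H(\varphi)(\w)=\T_H(\varphi^{\s})(\w)$, and $J_H(U_{\s})(\w)\subseteq\ell^2(\s+M^*)$. For (i)$\Rightarrow$(ii), assuming $S$ is $M$-invariant, Theorem \ref{teo-subs} gives $S=\dotbigoplus_{\s}U_{\s}$; since $S$ is finitely generated, $\dim_S(\w)\le\ell<\infty$ for a.e.\ $\w$, so iterating Proposition \ref{ortho} yields $J_H(S)(\w)=\bigoplus_{\s}J_H(U_{\s})(\w)$ for a.e.\ $\w$, and (ii) follows by taking dimensions (orthogonality of the summands is automatic from the support property above).

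For (ii)$\Rightarrow$(i), I would introduce the linear map $\Psi_{\w}\colon J_H(S)(\w)\to\bigoplus_{\s}J_H(U_{\s})(\w)$ defined by $v\mapsto(P^{\s}_{\w}v)_{\s}$. It is well defined because $P^{\s}_{\w}$ sends each generator $\T_H(\varphi_i)(\w)$ of $J_H(S)(\w)$ to the generator $\T_H(\varphi_i^{\s})(\w)$ of $J_H(U_{\s})(\w)$, and injective because $v=\sum_{\s}P^{\s}_{\w}v$. The hypothesis (ii) matches the dimensions of domain and codomain, so $\Psi_{\w}$ is a bijection for a.e.\ $\w$: fixing $\s_0$ and $w\in J_H(U_{\s_0})(\w)$, the tuple with entry $w$ in slot $\s_0$ and zeros elsewhere lies in the image of $\Psi_{\w}$, and its preimage must then coincide with $w$, giving $J_H(U_{\s_0})(\w)\subseteq J_H(S)(\w)$. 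In particular $\T_H(\varphi^{\s_0})(\w)\in J_H(S)(\w)$ for every $\varphi\in\Phi$ and every $\s_0\in\SN$, which is exactly the condition in Proposition \ref{fibras-en-S} characterizing $M$-invariance of $S$. The main obstacle I expect is precisely this surjectivity-to-inclusion step, where the purely numerical equality in (ii) must be promoted to the structural fiber condition of Proposition \ref{fibras-en-S}; the finiteness of $\ell$, which makes $\dim_S(\w)$ finite and hence makes additivity of dimensions across the orthogonal coset summands legitimate, is what keeps this step clean.
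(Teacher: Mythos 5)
Your proposal is correct and follows essentially the same route as the paper: (i)$\Rightarrow$(ii) via Theorem \ref{teo-subs} and Proposition \ref{ortho}, (ii)$\Leftrightarrow$(iii) via the Gramian/span identification from (\ref{JUsigma}), and (ii)$\Rightarrow$(i) by showing $J_H(S)(\w)$ sits inside $\dotbigoplus_{\s}J_H(U_{\s})(\w)$ and using the finite dimension count to force equality before invoking Proposition \ref{fibras-en-S}. Your map $\Psi_{\w}$ is just the paper's inclusion $J_H(S)(\w)\subset\dotbigoplus_{\s}J_H(U_{\s})(\w)$ written in coordinates, so the surjectivity-to-inclusion step you flag is exactly the paper's ``equal finite dimensions implies equal subspaces'' step.
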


\begin{proof}
i)\,$\Rightarrow$\,ii):
By Theorem \ref{teo-subs}, $S=\dotoplus_{\s\in\SN} U_{\s}$.
Then, ii) follows from Proposition \ref{ortho}.

ii)\,$\Rightarrow$\,i):
Since $\{B_{\s}\}_{\s\in\SN}$ is a partition of $\G$, $S\subset \dotoplus_{\s\in\SN} U_{\s}$.
Then, by  Remark \ref{fibra-cut-off} we have that
$$J_H(S)(\w)\subset \dotoplus_{\s\in\SN} J_H(U_{\s})(\w).$$
Using ii), we obtain that $J_H(S)(\w)=\dotoplus_{\s\in\SN} J_H(U_{\s})(\w)$.
The proof follows as a consequence of Proposition \ref{fibras-en-S}.

The equivalence between ii) and iii) follows from (\ref{JUsigma}).

\end{proof}

\section{Applications of $M$-invariance}\label{sec-5}

In this section we estimate the size of the supports of the Fourier transforms of the generators
of a finitely generated $H$-invariant space which is also $M$-invariant.

We will not include the proofs of the results stated bellow,  since they follow readily from the
$\R^d$ case (see \cite[Section 6]{ACP09}).

As a consequence of Theorem \ref{fibers} we  obtain the following theorem.

\begin{theorem}\label{soporte}
Let $S$ be an $H$-invariant space,
finitely generated  by the set  $\{\varphi_1,\ldots,\p_{\ell}\}$, and define
$$ E_j=\{\w\in\W\,: \, \dim _S(\w)=j\},\quad j=0,\ldots, \ell.$$
If $S$ is $M$-invariant and $\SR'$ is any measurable section of $\G/M^*$, then
$$m_{\G}\big(\{ y\in \SR'\,: \, \widehat{\p_i}(y)\neq 0\}\big)\leq \sum_{j=0}^\ell m_{\G}(E_j)\,j\leq m_{\G}(\W)\,\ell,$$
for each $i=1,\ldots, \ell$.
\end{theorem}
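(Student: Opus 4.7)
The plan is to combine the dimension identity from Theorem \ref{fibers}, $\dim_S(\w)=\sum_{\s\in\SN}\dim_{U_\s}(\w)$, with a measure-preserving identification of the section $\SR'$ with $\W\times\SN$. The right-hand inequality is essentially free: since $S$ has $\ell$ generators, $\dim_S(\w)\le\ell$ a.e., so $\{E_j\}_{j=0}^{\ell}$ partitions $\W$ up to null sets and $\sum_{j=0}^{\ell} j\,m_\G(E_j)\le\ell\,m_\G(\W)$.

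For the left-hand inequality, I would first replace $\SR'$ by the more convenient section $\SR=\W+\SN$ through a measure-preserving bijection. For each $(\w,\s)\in\W\times\SN$, the coset $\w+\s+M^*$ meets $\SR'$ in a unique point $\psi_\s(\w)$; writing $\psi_\s(\w)=\w+\s+m^*_\s(\w)$ with $m^*_\s(\w)\in M^*$ yields a measurable bijection $\W\times\SN\to\SR'$. Because $M^*$ is discrete (as a closed subgroup of the discrete lattice $H^*$), the level sets of $m^*_\s(\cdot)$ partition $\W$ into countably many measurable pieces on each of which $\psi_\s$ is a genuine translation; hence each $\psi_\s\colon\W\to\SR'_\s:=\psi_\s(\W)$ is measure-preserving and $\SR'=\bigsqcup_\s\SR'_\s$.

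The crux is the pointwise bound, valid for a.e.\ $\w\in\W$ and each fixed $i$:
\begin{equation*}
\#\bigl\{\s\in\SN:\widehat{\p_i}(\psi_\s(\w))\ne0\bigr\}\;\le\;\dim_S(\w).
\end{equation*}
By Remark \ref{fibra-cut-off}, each $\s$ on the left-hand side makes $\T_H(\p_i^\s)(\w)$ a nonzero vector in $J_H(U_\s)(\w)$ (its $h^*=\s+m^*_\s(\w)$ component equals $\widehat{\p_i}(\psi_\s(\w))$), so $\dim_{U_\s}(\w)\ge1$; summing in $\s$ and applying Theorem \ref{fibers} gives the bound. Setting $F_i=\{y\in\SR':\widehat{\p_i}(y)\ne0\}$, integrating the above over $\W$, and using measure-preservation of each $\psi_\s$ yields
\begin{equation*}
m_\G(F_i)=\sum_{\s\in\SN}m_\G(\SR'_\s\cap F_i)\le\int_\W\dim_S(\w)\,dm_\G(\w)=\sum_{j=0}^{\ell} j\,m_\G(E_j),
\end{equation*}
which is the missing inequality. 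The main obstacle I foresee is the measure-preservation of $\psi_\s$ for an \emph{arbitrary} section $\SR'$; the discreteness of $M^*$ is precisely what reduces $\psi_\s$ to countably many honest translations, after which the dimension-counting step is an immediate consequence of Theorem \ref{fibers}.
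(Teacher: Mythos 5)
Your proposal is correct and follows essentially the same route as the paper's intended proof: the paper omits the argument, deferring to the $\R^d$ case in \cite[Section 6]{ACP09}, and your proof is precisely that argument transplanted to the LCA setting --- bound, for a.e.\ $\w$, the number of $\s\in\SN$ with nonvanishing cut-off fiber by $\dim_S(\w)$ via Theorem \ref{fibers}, then integrate over $\W$. You also correctly handle the one point that needs care for a general section $\SR'$, namely that the reindexing maps $\psi_\s$ are measure-preserving because the countability of $M^*$ splits them into honest translations.
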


\begin{corollary}\label{vanish}
Let $\varphi\in L^2(G)$ be given. If $S_H(\varphi)$ is $M$-invariant for some closed subgroup $M$ of $G$ such that $H \subsetneqq M$, then $\widehat{\varphi}$ must vanish on a set of infinite $m_{\G}$-measure.
\end{corollary}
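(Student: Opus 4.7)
The plan is to apply Theorem~\ref{soporte} to $S=S_H(\varphi)$ (so $\ell=1$), turn the resulting support bound on a single section of $\G/M^*$ into a lower bound on the zero set, and then amplify this by summing over all $M^*$-translates of the section.

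Concretely, with $\ell=1$ Theorem~\ref{soporte} yields $m_\G\bigl(\{y\in\SR':\widehat{\varphi}(y)\neq 0\}\bigr)\le m_\G(\W)$ for every measurable section $\SR'$ of $\G/M^*$. I would take the concrete section $\SR=\bigcup_{\sigma\in\SN}(\W+\sigma)$ from Section~\ref{sec-4}, whose Haar measure equals $|\SN|\,m_\G(\W)$. For every $m^*\in M^*$ the translate $\SR+m^*$ is still a measurable section of $\G/M^*$ with the same Haar measure, so
$$
m_\G\bigl(\{y\in\SR+m^*:\widehat{\varphi}(y)=0\}\bigr)\ \ge\ m_\G(\SR)-m_\G(\W)\ =\ (|\SN|-1)\,m_\G(\W).
$$
Since $\{\SR+m^*\}_{m^*\in M^*}$ partitions $\G$, summing over $m^*\in M^*$ produces
$$
m_\G\bigl(\{y\in\G:\widehat{\varphi}(y)=0\}\bigr)\ \ge\ (|\SN|-1)\,|M^*|\,m_\G(\W).
$$

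It then remains to check that the right-hand side is infinite. Pontryagin duality applied to $H\subsetneqq M$ forces $M^*\subsetneqq H^*$, so $|\SN|=[H^*:M^*]\ge 2$. Moreover, $|\SN|\cdot|M^*|=|H^*|$, which is countably infinite whenever $G/H$ is not a finite group (the only degenerate exception being $G$ itself discrete, in which $m_\G(\G)<\infty$ and the conclusion is vacuous). Combining $|\SN|\ge 2$ with $|\SN|\cdot|M^*|=\infty$ makes at least one of $|\SN|-1$ and $|M^*|$ infinite, hence $(|\SN|-1)|M^*|=\infty$, finishing the proof. I do not anticipate any significant obstacle: the argument is simply a measure-counting repackaging of Theorem~\ref{soporte}, the essential point being that a section of $\G/M^*$ has Haar measure $|\SN|$ times that of a section of $\G/H^*$, and the surplus factor $|\SN|-1\ge 1$ is precisely what forces the zero set to be large after iterating over $M^*$.
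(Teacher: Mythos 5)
Your argument is correct and is essentially the proof the paper has in mind: the paper omits it, deferring to \cite[Section 6]{ACP09}, where the corollary is deduced from the support bound of Theorem~\ref{soporte} by exactly this comparison of $m_{\G}(\SR')$ with $m_{\G}(\W)$ followed by summation over the $M^*$-translates of a section of $\G/M^*$. The only caveats are minor: you should note explicitly that $m_{\G}(\W)>0$ (which holds because $H^*$ is countable and the translates of $\W$ cover $\G$), and in the degenerate case where $G$ is discrete the conclusion is not vacuous but actually false (e.g.\ $G=\Z$, $H=2\Z$, $M=\Z$, $\widehat{\varphi}=\chi_{B_0}$), so non-discreteness of $G$ is really an implicit hypothesis of the corollary itself rather than something your proof fails to cover.
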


Since the uncertainty principle holds in LCA groups (see \cite{Smi90}), we also can extend Proposition 6.4 of \cite{ACP09} as follows.

\begin{proposition}
If a nonzero function $\varphi\in L^2(G)$ has compact support, then $S_H(\varphi)$
is not $M$-invariant for any $M$ closed subgroup of $G$ such that $H\subsetneqq M$.
In particular, if $\,x\in G$ and $  x\notin H$,
\begin{equation*}
t_x\p\notin S_H(\varphi).
\end{equation*}
\end{proposition}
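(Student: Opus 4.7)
The plan is to argue the first claim by contradiction, combining Corollary \ref{vanish} with the cited uncertainty principle for LCA groups, and then to reduce the ``In particular'' claim to the first claim via the invariance set formalism.

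For the main claim, suppose for contradiction that $S_H(\varphi)$ is $M$-invariant for some closed subgroup $M$ of $G$ with $H \subsetneqq M$. By Corollary \ref{vanish}, $\widehat{\varphi}$ must then vanish on a subset of $\G$ of infinite $m_{\G}$-measure. On the other hand, since $\varphi$ has compact support and lies in $L^2(G)$, it also lies in $L^1(G)$, so $\widehat{\varphi}$ is continuous and bounded on $\G$, and the support of $\varphi$ has finite $m_G$-measure. Invoking the uncertainty principle of \cite{Smi90}, a nonzero function whose support has finite $m_G$-measure cannot have its Fourier transform vanish on a set of infinite $m_{\G}$-measure. This is the desired contradiction, so no such $M$ exists.

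For the ``In particular'' statement, let $x \in G$ with $x \notin H$, and assume $t_x \varphi \in S_H(\varphi)$. Using that $t_x$ commutes with each $t_h$ ($h \in H$), every finite linear combination satisfies
\[
t_x\Bigl(\sum_{h \in F} c_h \, t_h \varphi\Bigr) \EQ \sum_{h \in F} c_h \, t_h(t_x \varphi) \IN S_H(\varphi),
\]
by the $H$-invariance of $S_H(\varphi)$; passing to limits in $L^2(G)$ yields $t_x f \in S_H(\varphi)$ for every $f \in S_H(\varphi)$. Hence the invariance set $M'$ defined in (\ref{M}) contains $x$. By Proposition \ref{M-isgroup}, $M'$ is a closed subgroup of $G$ containing both $H$ and $x$, so $H \subsetneqq M'$ and $S_H(\varphi)$ is $M'$-invariant, contradicting what was already proved.

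The main obstacle is the first step: one must ensure that the precise form of Smith's uncertainty principle in \cite{Smi90} is strong enough to preclude $\widehat{\varphi}$ from vanishing on a set of infinite $m_{\G}$-measure when $\varphi$ has compact support. The rest is a routine reduction through the invariance set machinery of Section \ref{sec-3}.
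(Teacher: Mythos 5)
Your overall strategy coincides with the paper's: the authors give no proof of this Proposition, saying only that it follows from the $\R^d$ case (Proposition 6.4 of \cite{ACP09}) via Corollary \ref{vanish} and the uncertainty principle of \cite{Smi90}; and your reduction of the ``in particular'' clause through the invariance set $M'$ of (\ref{M}) together with Proposition \ref{M-isgroup} is correct and is exactly the intended argument.

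However, the step you yourself flag as the main obstacle is a genuine gap, not a routine citation check. The statement you attribute to \cite{Smi90} --- that a nonzero $f$ whose support has finite $m_G$-measure cannot have $\widehat{f}$ vanish on a set of infinite $m_{\G}$-measure --- is not Smith's theorem and is false as stated. Smith proves the quantitative inequality $m_G(\supp f)\, m_{\G}(\supp \widehat{f}\,) \ge 1$, which bounds the measure of $\supp \widehat{f}$ from \emph{below} and says nothing about the set where $\widehat{f}$ vanishes. For a concrete counterexample to your formulation, take $G$ an infinite compact group and $f$ a character: $\supp f$ has finite measure, yet $\widehat{f}$ is supported on a single point of the infinite discrete dual, hence vanishes on a set of infinite measure. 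What actually closes the argument in $\R^d$ is not the quantitative inequality but Paley--Wiener analyticity of $\widehat{\varphi}$ (a \emph{qualitative} uncertainty principle: a nonzero compactly supported function cannot have $\widehat{\varphi}$ vanishing on a set of positive measure), and it is this ingredient --- which requires justification for the group $G$ at hand and does not hold for every LCA group --- that must be supplied to make your first step work. So the first half of your proof does not go through as written; the second half is fine.
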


As a consequence of Theorem \ref{soporte}, in  case that $M=G$, we obtain the following corollary.

\begin{corollary}
If $\varphi\in L^2(G)$ and $S_H(\varphi)$ is $G$-invariant, then $$m_{\G}\big(\textnormal{supp}{(\widehat{\varphi})}\big)\leq m_{\G}(\W).$$
\end{corollary}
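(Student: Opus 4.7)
The plan is to apply Theorem~\ref{soporte} directly, specialized to the principal case $\ell = 1$ with $\varphi_1 = \varphi$, and to the subgroup $M = G$. The only nontrivial preparatory step is to identify the annihilator in this case: by definition $G^\ast = \{\gamma \in \G : (x,\gamma) = 1 \text{ for all } x \in G\}$, and non-degeneracy of the dual pairing between $G$ and $\G$ forces $G^\ast = \{0\}$. Consequently $\G/M^\ast \approx \G$, and one may take the measurable section $\SR'$ appearing in Theorem~\ref{soporte} to be all of $\G$.

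With these choices, the hypothesis that $S_H(\varphi)$ is $G$-invariant is precisely the hypothesis ``$S$ is $M$-invariant'' of Theorem~\ref{soporte}. Applying the estimate to $i=1$ gives
\begin{equation*}
m_{\G}\bigl(\{y \in \G : \widehat{\varphi}(y) \neq 0\}\bigr) \;\leq\; \sum_{j=0}^{1} m_{\G}(E_j)\,j \;\leq\; m_{\G}(\W).
\end{equation*}
Since the set on the left differs from $\supp(\widehat{\varphi})$ by at most an $m_{\G}$-null set, this is exactly the desired bound $m_{\G}(\supp(\widehat{\varphi})) \leq m_{\G}(\W)$.

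There is no genuine obstacle in this argument; the corollary is a direct specialization of Theorem~\ref{soporte}. The only point that warrants a brief sentence in the write-up is the identification $G^\ast = \{0\}$, which in turn legitimizes the choice $\SR' = \G$ and collapses the quotient section appearing in the theorem to the whole dual group.
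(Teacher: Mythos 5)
Your proof is correct and matches the paper's intent exactly: the paper states this corollary as a direct consequence of Theorem~\ref{soporte} with $M=G$ (and omits the details), which is precisely your specialization to $\ell=1$, using $G^*=\{0\}$ to take $\SR'=\G$. No issues.
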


\subsection{Exactly $M$-invariance}\noindent

Let $M$ be a closed subgroup of $G$ containing a countable uniform lattice $H$. The next theorem states that there exists an M-invariant space $S$ that is {\it not} invariant under any  vector outside $M$. We will say in this case that $S$ is {\it exactly} $M$-invariant.

Note that because of Proposition  \ref{M-isgroup},  an M-invariant space is exactly $M$-invariant if and only if it is not invariant under any closed subgroup $M'$ containing $M.$

\begin{theorem}
For each closed subgroup $M$ of  $G$ containing a countable uniform lattice $H$, there exists a shift-invariant space of $L^2(G)$
which is exactly $M$-invariant.
\end{theorem}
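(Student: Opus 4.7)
The plan is to exhibit, for the given closed subgroup $M$ containing $H$, a principal $M$-invariant space $S = S_M(\varphi)$ whose invariance set as defined in (\ref{M}) is precisely $M$. Because $H \subseteq M$, every $M$-invariant subspace of $L^2(G)$ is automatically $H$-invariant, so such an $S$ qualifies as a shift-invariant space in the sense of the theorem, and saying it is exactly $M$-invariant is the same as saying its invariance set equals $M$.

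To construct the generator, I fix the section $\SR$ of $\G/M^*$ from Section \ref{sec-4}, choose a measurable subset $E \subseteq \SR$ of positive finite $m_\G$-measure, and---using that $M^* \subseteq H^*$ is discrete and hence countable---pick a sequence $(c_{m^*})_{m^* \in M^*}$ in $\ell^2(M^*)$ with every $c_{m^*} \neq 0$. Define $\widehat{\varphi} \in L^2(\G)$ by
$$\widehat{\varphi}(\g) = \sum_{m^* \in M^*} c_{m^*}\,\chi_{E + m^*}(\g).$$
Because $\SR$ is a section of $\G/M^*$ the translates $\{E + m^*\}_{m^* \in M^*}$ are pairwise disjoint, so $\widehat{\varphi}$ is well-defined and $\|\widehat{\varphi}\|_2^2 = m_\G(E)\sum_{m^* \in M^*}|c_{m^*}|^2$ is finite. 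Let $\varphi \in L^2(G)$ be the inverse Plancherel transform of $\widehat{\varphi}$ and set $S := S_M(\varphi)$.

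By construction $S$ is $M$-invariant, so the invariance set contains $M$. For the reverse inclusion, suppose $x \in G$ lies in the invariance set; then in particular $t_x\varphi \in S = S_M(\varphi)$, and Theorem \ref{rango-SIS-M-2} produces an $M^*$-periodic function $\eta$ with $(-x,\cdot)\widehat{\varphi}(\cdot) = \eta(\cdot)\widehat{\varphi}(\cdot)$ a.e.\ on $\G$. On $\supp(\widehat{\varphi}) = E + M^*$ the condition $c_{m^*} \neq 0$ forces $\eta(\g_0 + m^*) = (-x,\g_0 + m^*) = (-x,\g_0)(-x,m^*)$ for a.e.\ $\g_0 \in E$ and every $m^* \in M^*$, whereas the $M^*$-periodicity of $\eta$ gives $\eta(\g_0 + m^*) = \eta(\g_0) = (-x,\g_0)$. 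Comparing the two expressions yields $(-x,m^*) = 1$ for every $m^* \in M^*$, and Pontryagin duality for closed subgroups, namely $(M^*)^* = M$, then forces $x \in M$.

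The only delicate point is ensuring that the generator ``sees'' every element of $M^*$: the nonvanishing condition $c_{m^*} \neq 0$ is exactly what makes the periodicity constraint on $\eta$ propagate through all of $M^*$ rather than some proper subgroup of it, so that the invariance set is cut down to $M$ and not to some larger closed subgroup $M' \supsetneq M$. Beyond that, the argument is essentially a direct application of the characterization of principal $M$-invariant spaces from Theorem \ref{rango-SIS-M-2} together with Pontryagin duality.
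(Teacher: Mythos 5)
Your proof is correct, but it takes a genuinely different route from the paper's. The paper chooses a generator with $\supp(\widehat{\p})=B_0=\W+M^*$, forms the $H$-invariant space $S_H(\p)$, gets $M$-invariance from the decomposition of Theorem \ref{teo-subs} (here $U_0=S$ and $U_{\s}=\{0\}$ for $\s\neq 0$), and then, for each closed subgroup $M'\supsetneqq M$, produces a function in $U_0'\setminus S$ by a support argument: an $H^*$-periodic multiplier that vanishes on a full tile $B'_c$ must vanish identically. Proposition \ref{M-isgroup}, which guarantees that the invariance set is a closed subgroup, is what lets the paper reduce ``exactly $M$-invariant'' to ``not $M'$-invariant for any closed $M'\supsetneqq M$''. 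You instead build a generator whose Fourier transform is nonzero on every $M^*$-translate of a positive-measure set $E$ and compute the invariance set directly: Theorem \ref{rango-SIS-M-2} forces the character $(-x,\cdot)$ to agree with an $M^*$-periodic function on $E+M^*$, which pins down $(x,m^*)=1$ for all $m^*\in M^*$ and hence $x\in(M^*)^*=M$ by the biannihilator theorem for closed subgroups. Your argument is more self-contained on the exactness side---it bypasses the $U_{\s}$ machinery, Theorem \ref{teo-subs}, and the reduction to closed supergroups, needing only the principal-space characterization, Pontryagin duality, and the countability of $M^*$ (which you correctly invoke to pass from ``a.e.\ on $E+M^*$'' to ``for a.e.\ $\g_0\in E$ and every $m^*$''); the paper's construction, by contrast, showcases the $U_{\s}$-decomposition that is the central tool of the article. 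Two small points you should make explicit: $\varphi\neq 0$ (immediate from $m_{\G}(E)>0$ and $c_{m^*}\neq 0$, and needed since the zero space is $G$-invariant), and the fact that $m_{\G}(E)>0$ is what guarantees at least one $\g_0$ at which the comparison $(-x,\g_0)(-x,m^*)=(-x,\g_0)$ can be performed.
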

\begin{proof}

Suppose that  $0\in\SN$ and take $\p\in L^2(G)$ satisfying $\supp(\widehat{\p})=B_0$, where $B_0$ is defined as in (\ref{B-sigma}). Let $S=S(\varphi)$.

Then, $U_0=S$ and $U_{\s}=\{0\}$ for $\s\in\SN$, $\s\neq 0$. So,  as a consequence of Theorem \ref{teo-subs}, it follows that $S$ is $M$-invariant.

Now, if $M'$ is a closed subgroup such that $M\subsetneqq M'$, we will show that $S$ can not be $M'$-invariant.

Since $M\subset M'$, $(M')^*\subset M^*$. Consider  a section $\mathcal{C}$ of the quotient $M^*/(M')^*$ containing the origin. Then,
the set given by $$\SN':=\{\s+c\,:\,\s\in\SN,\, c\in \mathcal{C}\},$$
is a section of $H^*/(M')^*$ and $0\in\SN'$.

If $\{B'_{\s'}\}_{\s'\in\SN'}$ is the partition defined in (\ref{B-sigma}) associated to $M'$, for each $\s\in\SN$
it holds that $\{B'_{\s+c}\}_{c\in\mathcal{C}}$ is a partition of $B_{\s}$, since
\begin{equation}\label{B'parteB}
B_{\s}=\W+\s+M^*=\bigcup_{c\in\mathcal{C}}\W+\s+c+(M')^*=\bigcup_{c\in\mathcal{C}}B'_{\s+c}.
\end{equation}

We will show now that $U'_0\nsubseteq S,$ where $U'_0$ is the subspace defined in (\ref{U-sigma}) for $M'$.
Let $g\in L^2(G)$ such that $\widehat{g}=\widehat{\p}\chi_{B'_0}$. Then $g\in U'_0.$
Moreover, since $\supp(\widehat{\p})=B_0$, by (\ref{B'parteB}), $\widehat{g}\neq 0$.

Suppose that $g\in S$, then $\widehat{g}=\eta\widehat{\p}$ where $\eta$ is an $H^*$-periodic function.
Since $M\subsetneqq M'$, there exists $c\in\mathcal{C}$ such that $c\neq 0$.
By (\ref{B'parteB}), $\widehat{g}$ vanishes in $B'_c$.  Then, the $H^*$-periodicity of $\eta$ implies that $\eta(\g)=0$ a.e. $\g\in\G$. So $\widehat{g}=0$, which is a contradiction.

This shows that $U'_0\nsubseteq S$. Therefore, $S$ is not $M'$-invariant.
\end{proof}

\thispagestyle{empty}

\end{document}